\newcommand {\R}{\mathbb{R}}
\newcommand{\sgn}{\mathrm{sgn}}
\newcommand {\grad}{\nabla}
\newcommand{\matlab}{{\sc Matlab}}
\newcommand{\beq}{\begin{equation}}
\newcommand{\eeq}{\end{equation}}
\newcommand{\LRA}{\Leftrightarrow }
\newtheorem{theorem}                {Theorem}
\newtheorem{corollary}  [theorem]   {Corollary}
\newtheorem{lemma}      [theorem]   {Lemma}
\newtheorem{assumption}         {Assumption}
\DeclarePairedDelimiter\ceil{\lceil}{\rceil}
\title{Analysis of the Gradient Method with an Armijo-Wolfe Line Search on a Class of \\Nonsmooth Convex Functions}
\author{Azam Asl\thanks{Courant Institute of Mathematical Sciences, New York University}
\and 
Michael L.~Overton\thanks{Courant Institute of Mathematical Sciences, New York University. Supported in
part by National Science Foundation Grant DMS-1620083.}
}
\date{\today}
\begin{document}
\maketitle

\begin{abstract}
It has long been known that the gradient (steepest descent) method may fail on nonsmooth problems, but the
examples that have appeared in the literature are either devised specifically to defeat a gradient or
subgradient method with an exact line search or are unstable with respect to perturbation of the initial point. 
We give an analysis of  the gradient method 
with steplengths satisfying the Armijo and Wolfe  
inexact line search conditions on the nonsmooth convex function $f(x) = a|x^{(1)}| + \sum_{i=2}^{n} x^{(i)}$.
We show that if $a$ is sufficiently large,
satisfying a condition that depends only on the Armijo parameter, then, 
when the method is initiated at any point $x_0 \in \R^n$ with $x^{(1)}_0\not = 0$, the iterates converge to a 
point $\bar x$ with $\bar x^{(1)}=0$, although $f$ is unbounded below. We also give conditions under which the iterates $f(x_k)\to-\infty$,
using a specific Armijo-Wolfe bracketing line search.
Our experimental results demonstrate that our analysis is
reasonably tight.

\end{abstract}



\section{Introduction}
\label{sec:intro}
First-order methods have experienced a widespread revival in recent years, as the number
of variables $n$ in many applied optimization problems has grown far too large to apply methods that require more
than $O(n)$ operations per iteration.  Yet many widely used methods, including limited-memory quasi-Newton 
and conjugate gradient methods, remain poorly understood on nonsmooth problems, and even the simplest
such method, the gradient method, is nontrivial to analyze in this setting.
Our interest is in methods with inexact line searches, since exact line searches are clearly out of the question when the 
number of variables is large, while methods with prescribed step sizes typically converge slowly, particularly
if not much is known in advance about the function to be minimized.

The gradient method dates back to Cauchy \cite{Cauchy}.
Armijo \cite{ARM66} was the first to establish convergence to stationary points of smooth functions 
 using an inexact line search with a simple ``sufficient decrease" condition. 
 Wolfe \cite{WOL69}, discussing line search methods for more general classes of methods,
 introduced a ``directional derivative increase" condition among several others.
The Armijo condition ensures that the line search step is not too large while the Wolfe 
condition ensures that it is not too small.  Powell \cite{POW76a} seems to have been the first to point out that
combining the two conditions leads to a convenient bracketing line search, noting also in another paper
\cite{POW76b} that use of the Wolfe condition ensures that, for quasi-Newton methods, 
the updated Hessian approximation is positive definite.  
Hiriart-Urruty and Lemarechal \cite[Vol~1, Ch.~11.3] {HULM} 
give an excellent discussion of all these issues, although they reference neither \cite{ARM66} nor 
\cite{POW76a} and \cite{POW76b}. They also comment (p.~402) on a surprising error in  \cite{Cauchy}.

Suppose that $f$, the function to be minimized, is a nonsmooth convex function.  
An example of \cite{WOL75} shows that the ordinary gradient method with an 
exact line search may converge to a non-optimal point, without encountering any points where 
$f$ is nonsmooth except in the limit. This example is
stable under perturbation of the starting point, but it does not apply when the line search is inexact.  
Another example given in \cite[vol.~1, p.~363]{HULM} 
applies to a subgradient 
method in which the search
direction is defined by the steepest descent direction, i.e., the negative of 
the element of the subdifferential with smallest norm, again showing that use of an exact line search
results in convergence to a non-optimal point.
This example is also stable under perturbation of the initial point, and, unlike Wolfe's example, 
it also applies when an inexact line search is used, but it is more complicated than is needed for the 
results we give below because it was specifically designed to defeat the steepest-descent subgradient 
method with an exact line search.  Another example of convergence to a non-optimal point
of a convex max function using a specific subgradient method with an exact line search
goes back to \cite{DM71}; see \cite[p.~385]{Flet87}. 
More generally, in a ``black-box" subgradient method, 
the search direction is the negative of any subgradient returned by an ``oracle", which may 
not be a descent direction if the function is not differentiable at the point, 
although this is unlikely if the current point was not generated by an exact line search
since convex functions are differentiable almost everywhere.
The key advantage of the subgradient method is that, as long as $f$ is convex and bounded below, 
convergence to its minimal value can be guaranteed even if $f$ is nonsmooth by predefining
a sequence of steplengths to be used, but the disadvantage is that convergence is usually slow.
Nesterov \cite{Nes05} improved the complexity of such methods using a smoothing technique, 
but to apply this one needs some knowledge of the structure of the objective function.  

The counterexamples mentioned above
motivated the introduction of bundle methods by \cite{LEM75} and \cite{WOL75} for nonsmooth
convex functions and, for nonsmooth, nonconvex problems, the bundle methods of \cite{KIW85} and  
the gradient sampling algorithms of \cite{BLO} and \cite{KIW07}.
These algorithms all have fairly strong convergence properties, to a nonsmooth (Clarke)
stationary value when these exist in the nonconvex case (for gradient sampling, with probability one), 
but when the number of variables is large the cost per iteration is much higher than the cost of a gradient step.
See the recent survey paper \cite{BCLOS} for more details. The ``full" BFGS method is a very effective alternative
choice for nonsmooth optimization \cite{LO13},
and its $O(n^2)$ cost per iteration (for the matrix-vector products that it requires)
is generally much less than the cost of the bundle or gradient sampling methods, 
but its convergence results for nonsmooth functions are limited to very special cases. The limited memory
variant of BFGS \cite{LN89}
costs only $O(n)$ operations per iteration, like the gradient method, but its behavior on
nonsmooth problems is less predictable.

In this paper we analyze the ordinary gradient method 
with an inexact line search applied to a simple nonsmooth convex function.
We require points accepted by the line search to satisfy both Armijo and Wolfe conditions
for two reasons. The first is that our longer-term goal is to carry out a related analysis for the  
limited memory BFGS method for which the Wolfe condition is essential.
The second is that although the Armijo condition is enough to prove convergence of the gradient method
on smooth functions, the inclusion of the Wolfe condition is potentially useful in the nonsmooth case, where the
norm of the gradient gives no useful information such as an estimate of the distance to a minimizer.
For example, consider the absolute value function in one variable initialized at $x_0$ with $x_0$ large.
A unit step gradient method with only an Armijo condition will require $O(x_0)$ iterations just to change the
sign of $x$, while an Armijo-Wolfe line search with extrapolation defined by doubling requires only one
line search with $O(\log_2(x_0))$ extrapolations to change the sign of $x$. Obviously, the so-called strong
Wolfe condition recommended in many books for smooth optimization, which requires a reduction in the 
absolute value of the directional derivative, is a disastrous choice when $f$ is nonsmooth. We mention here
that in a recent paper on the analysis of the gradient method with fixed step sizes \cite{TayEA}, Taylor et al.\ remark that 
``we believe it would be interesting to analyze [gradient] algorithms involving line-search, such as backtracking or Armijo-Wolfe 
procedures."

We focus on the nonsmooth convex function mapping $\R^n$ to $\R$ defined by

\beq \label{fdef}
f(x) = a|x^{(1)}| + \sum_{i=2}^{n} x^{(i)} .
\eeq

We show that if $a$ satisfies a lower bound that depends only on the Armijo parameter, then the 
iterates generated by the gradient method with steps satisfying Armijo and Wolfe conditions converge to a point $\bar x$ with $\bar x^{(1)}=0$, regardless of the starting point, although $f$ is unbounded below.   
The function $f$ defined in \eqref{fdef} was also used by \cite[p.~136]{LO13} with
$n=2$ and $a=2$ to illustrate 
failure of the gradient method with a specific line search, but the observations made there are not stable with respect to
small changes in the initial point.

The paper is organized as follows. In Section~\ref{sec:no_specific_linesch} we establish the main theoretical results, without assuming the use of
any specific line search beyond satisfaction of the Armijo and Wolfe conditions. In Section~\ref{sec:specific_linesch}, we extend these results
assuming the use of a bracketing line search that is a specific instance of the ones
outlined by \cite{POW76a} and \cite{HULM}. 
In Section~\ref{sec:expt_results}, we give experimental results, showing that our theoretical results are
reasonably tight.
We discuss connections with the convergence theory for subgradient methods in Section~\ref{sec:subgradient}.
We make some concluding remarks in Section~\ref{sec:conclusion}.


\section{Convergence Results Independent of a Specific Line Search}\label{sec:no_specific_linesch}

First let $f$ denote any locally Lipschitz function mapping $\R^n$ to $\R$, and let
$x_k \in \R^n$, $k=0,1,\ldots,$ denote the $k$th iterate of an optimization algorithm
where $f$ is differentiable at $x_k$ with gradient $\grad f(x_k)$.  Let $d_k\in\R^n$
denote a descent direction at the $k$th iteration, i.e., satisfying $\grad f(x_k)^T d_k < 0$,
and assume that $f$ is bounded below on the line $\{x_k+t d_k: t \geq 0\}$.
Let $c_1$ and $c_2$, respectively the Armijo and Wolfe parameters,
satisfy $0<c_1<c_2< 1.$
We say that the step $t$ satisfies the Armijo condition at iteration $k$ if 
\beq
       A(t):\quad           f(x_k + t d_k) \leq f(x_k) + c_1 t \grad f(x_k)^T d_k \label{armijo_cond}
\eeq
and that it satisfies the Wolfe condition if \footnote{There is a subtle distinction between the Wolfe condition given here
and that given in \cite{LO13},
since here the Wolfe condition is understood to fail if the gradient of $f$ does not exist at $x_k+td_k$, while in \cite{LO13} it is
understood to fail if the function of one variable $s\mapsto f(x_k + s d_k)$ is not differentiable at $s=t$.
For the example analyzed here, these conditions are equivalent.} 
\beq
       W(t):\quad  f \mathrm{~is~differentiable~at~}x_k + t d_k \mathrm{~with~}  \grad f(x_k + t d_k)^T d_k \geq c_2 \grad f(x_k)^T d_k. \label{wolfe_cond}
\eeq
The condition $0<c_1<c_2< 1$ ensures that points $t$ satisfying $A(t)$ and $W(t)$ exist, as is well known in the convex case
and the smooth case; for more general $f$, see \cite{LO13}.
The results of this section are independent of any choice of line search to generate such points.
Note that as long as $f$ is differentiable at the initial iterate, defining subsequent
iterates by $x_{k+1}=x_k+t_k d_k$, where $W(t)$ holds for $t=t_k$, ensures that $f$ is differentiable at all $x_k$.

We now restrict our attention to $f$ defined by \eqref{fdef}, with
\beq\label{ddef}
 d_k = -\grad f(x_{k}) =  -\left[\begin{array}{c} \sgn(x_k^{(1)}) a  \\ \mathbb{1}\end{array}\right],
\eeq
where $\mathbb{1}\in\R^{n-1}$ denotes the vector of all ones.
We have
\[
      f(x_k + t d_k) = a\left|x_k^{(1)} - \sgn(x_k^{(1)})a t\,\right| + \sum_{i=2}^{n}x^{(i)}_k - (n-1)t.
\]
We assume that $a \geq \sqrt{n-1}$, so that $f$ is bounded below along the negative gradient direction as $t\to\infty$. Hence, $x_{k+1}= x_k + t_k d_k$ satisfies
\beq \label{osc}
         x_{k+1}^{(1)} = x_k^{(1)} - \,\sgn(x_k^{(1)})a t_k  \quad \mathrm{and} \quad x^{(i)}_{k+1} = x^{(i)}_k -t_k \mathrm{~~for~~} i=2,\hdots,n.
\eeq
We have 
\beq \label{gradf_d}
          \grad f(x_k)^T d_k = -(a^2 + n-1)
\eeq
and 
\beq \label{sgnn}
          \grad f(x_k + t_k d_k)^T d_k = -(a^2\,\sgn(x_{k+1}^{(1)})\sgn(x_k^{(1)}) + n-1).
\eeq

For clarity we summarize the underlying assumptions that apply to all the results in this section.
\begin{assumption}
Let $f$ be defined by \eqref{fdef} with $a \geq \sqrt{n-1}$ and define $x_{k+1}=x_k+t_k d_k$, with $d_k=-\grad f(x_k)$,
for some step $t_k$, $k=1,2,3,\ldots$, where $x_0$ is arbitrary provided that $x^{(1)}_0\not=0$.
\end{assumption}

\begin{lemma} 
\label{armijo_wolfe_equiv}
The Armijo condition $A(t_k)$ (i.e., \eqref{armijo_cond} with $t=t_k$), is equivalent to
\beq\label{arm_simp}
c_1t_k (a^2+n-1)  \le f(x_k) - f(x_{k+1})
\eeq 
and the Wolfe condition $W(t_k)$ (i.e., \eqref{wolfe_cond} with $t=t_k$) is equivalent to each of the following three conditions:
\beq
    \sgn(x_{k+1}^{(1)}) = -\sgn(x_k^{(1)})  \label{wolfe1},
\eeq
\beq
     t_k > \dfrac{|x_k^{(1)}|}{a} \label{wolfe2}
\eeq
and
\beq
         at_k=|x_{k+1}^{(1)} - x_k^{(1)}|= |x_k^{(1)}|+|x_{k+1}^{(1)}|. \label{wolfe3}
\eeq

\begin{proof} 
These all follow easily from \eqref{osc}, \eqref{gradf_d} and \eqref{sgnn}, using $c_2 < 1$ and  $a \geq \sqrt{n-1}$.
\end{proof}
\end{lemma}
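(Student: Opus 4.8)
The plan is to verify each equivalence directly from the three given identities \eqref{osc}, \eqref{gradf_d}, and \eqref{sgnn}, since the Lemma reduces entirely to bookkeeping with the sign structure of the one-dimensional coordinate $x^{(1)}$. First I would dispatch the Armijo equivalence: substituting \eqref{gradf_d} into the definition \eqref{armijo_cond} turns the right-hand side $c_1 t_k \grad f(x_k)^T d_k$ into $-c_1 t_k(a^2+n-1)$, so $A(t_k)$ reads $f(x_{k+1}) \le f(x_k) - c_1 t_k(a^2+n-1)$, which rearranges immediately to \eqref{arm_simp}. No inequality direction subtleties arise here, and the hypotheses $c_2<1$ and $a\ge\sqrt{n-1}$ are not even needed for this half.

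For the Wolfe condition, the natural route is to first rewrite $W(t_k)$ using \eqref{gradf_d} and \eqref{sgnn}. The condition $\grad f(x_{k+1})^T d_k \ge c_2 \grad f(x_k)^T d_k$ becomes
\[
-\bigl(a^2\,\sgn(x_{k+1}^{(1)})\sgn(x_k^{(1)}) + n-1\bigr) \ge -c_2(a^2+n-1),
\]
i.e.\ $a^2\,\sgn(x_{k+1}^{(1)})\sgn(x_k^{(1)}) + n-1 \le c_2(a^2+n-1)$. Here is where $c_2<1$ and $a\ge\sqrt{n-1}$ do the work: if the two signs agree (product $+1$) the left side is $a^2+n-1$, which strictly exceeds $c_2(a^2+n-1)$, so the inequality fails; if the signs disagree (product $-1$) the left side is $n-1-a^2$, and since $a\ge\sqrt{n-1}$ gives $a^2\ge n-1$ this is $\le 0 \le c_2(a^2+n-1)$, so it holds. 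Thus $W(t_k)$ holds exactly when $\sgn(x_{k+1}^{(1)})=-\sgn(x_k^{(1)})$, which is \eqref{wolfe1}. (I would also note that the differentiability clause of $W(t_k)$ requires $x_{k+1}^{(1)}\neq 0$, which the sign-flip condition automatically enforces, so the equivalence is genuine.)

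Finally I would chain \eqref{wolfe1} to \eqref{wolfe2} and \eqref{wolfe3} using the update rule \eqref{osc}. From $x_{k+1}^{(1)} = x_k^{(1)} - \sgn(x_k^{(1)})a t_k$, the sign of $x^{(1)}$ flips precisely when the step $\sgn(x_k^{(1)})a t_k$ overshoots $x_k^{(1)}$, i.e.\ when $at_k > |x_k^{(1)}|$, giving \eqref{wolfe2}. For \eqref{wolfe3}, once the signs are opposite, $x_k^{(1)}$ and $x_{k+1}^{(1)}$ lie on opposite sides of the origin, so $|x_{k+1}^{(1)}-x_k^{(1)}|=|x_k^{(1)}|+|x_{k+1}^{(1)}|$; combined with $x_{k+1}^{(1)}-x_k^{(1)} = -\sgn(x_k^{(1)})a t_k$ (whose magnitude is $at_k$ since $t_k>0$) this yields the full chain of equalities. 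I do not expect a genuine obstacle here; the only point demanding care is the case analysis on the sign product in the Wolfe step, where one must confirm that the hypotheses $c_2<1$ and $a\ge\sqrt{n-1}$ make the dichotomy clean (strict failure when signs agree, guaranteed success when they disagree), and that the boundary case $at_k=|x_k^{(1)}|$ (where $x_{k+1}^{(1)}=0$) is correctly excluded by the differentiability requirement rather than slipping through.
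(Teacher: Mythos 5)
Your proposal is correct and follows exactly the route the paper intends: its one-line proof simply cites \eqref{osc}, \eqref{gradf_d}, and \eqref{sgnn} together with $c_2<1$ and $a\ge\sqrt{n-1}$, and your substitution of \eqref{gradf_d} into the Armijo condition, sign case analysis on $\sgn(x_{k+1}^{(1)})\sgn(x_k^{(1)})$ for the Wolfe condition, and use of the update rule for \eqref{wolfe2}--\eqref{wolfe3} is precisely that computation written out. Your explicit attention to the boundary case $at_k=|x_k^{(1)}|$ (where differentiability fails) is in fact more careful than the paper's own proof.
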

Thus, $t_k$ satisfies the Wolfe condition if and only if the iterates $x_k$ oscillate back and forth
across the $x^{(1)}=0$ axis.\footnote{
The same oscillatory behavior occurs if we replace the Wolfe condition by the Goldstein condition
$f(x_k + t d_k) \ge f(x_k) + c_2 t \grad f(x_k)^T d_k$. }

\begin{theorem}
\label{key_ineq} 
Suppose $t_k$ satisfies $A(t_k)$ and $W(t_k)$ for $k=1,2,3,\ldots,N$ and define $S_N=\sum_{k=0}^{N-1} t_k$. 
Then
\beq \label{f_lb_ub}
     c_1 (a^2+n-1)S_N \le f(x_0)-f(x_N) \le (n-1)S_N + a|x^{(1)}_0|,
\eeq
so that $S_N$ is bounded above as $N\to\infty$ if and only if $f(x_N)$ is bounded below.  
Furthermore, $f(x_N)$ is bounded below if and only if $x_N$ converges to a point $\bar x$ with $\bar x^{(1)}=0$.

\end{theorem}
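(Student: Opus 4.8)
The plan is to establish the two-sided bound \eqref{f_lb_ub} first, and then read off both the $S_N$-versus-$f(x_N)$ equivalence and the convergence statement from it together with Lemma~\ref{armijo_wolfe_equiv}.

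For the lower bound I would sum the Armijo reformulation \eqref{arm_simp} over the relevant indices $k$. Since the right-hand side $f(x_k)-f(x_{k+1})$ telescopes, summation yields $c_1(a^2+n-1)S_N \le f(x_0)-f(x_N)$. For the upper bound I would compute $f(x_0)-f(x_N)$ directly from the definition \eqref{fdef}. By \eqref{osc} each coordinate $i\ge 2$ satisfies $x_N^{(i)}=x_0^{(i)}-S_N$, so $\sum_{i=2}^n (x_0^{(i)}-x_N^{(i)})=(n-1)S_N$, whence $f(x_0)-f(x_N)=(n-1)S_N+a|x_0^{(1)}|-a|x_N^{(1)}|$. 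Dropping the nonpositive term $-a|x_N^{(1)}|$ gives the stated upper bound.

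Next I would record that $S_N$ is nondecreasing: by the Wolfe reformulation \eqref{wolfe2} we have $t_k>|x_k^{(1)}|/a\ge 0$ (and in fact $x_k^{(1)}\ne 0$ for all $k$, since \eqref{wolfe1} forces the signs to alternate and hence to stay nonzero). Thus $S_N$ is bounded above if and only if it converges. The equivalence ``$S_N$ bounded above $\Leftrightarrow f(x_N)$ bounded below'' then follows purely algebraically from \eqref{f_lb_ub}: the lower bound gives $f(x_N)\le f(x_0)-c_1(a^2+n-1)S_N$, so $S_N\to\infty$ forces $f(x_N)\to-\infty$; conversely the upper bound gives $f(x_N)\ge f(x_0)-(n-1)S_N-a|x_0^{(1)}|$, so a finite bound on $S_N$ yields a finite lower bound on $f(x_N)$. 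Reading the contrapositives supplies both directions.

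For the final claim, the easy direction is that if $x_N\to\bar x$ then $f(x_N)\to f(\bar x)$ by continuity of $f$, so $f(x_N)$ is bounded below (the limit condition $\bar x^{(1)}=0$ is not even needed here). For the forward direction, assume $f(x_N)$ is bounded below; then $S_N$ converges, so $t_k\to 0$, and the coordinates $i\ge 2$ converge because $x_N^{(i)}=x_0^{(i)}-S_N$. The only delicate point, and the step I expect to be the crux, is the convergence of the first coordinate: here I would invoke the third Wolfe reformulation \eqref{wolfe3}, which gives $|x_k^{(1)}|+|x_{k+1}^{(1)}|=at_k\to 0$, forcing $x_k^{(1)}\to 0$. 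This is exactly where the oscillation across the axis $x^{(1)}=0$ recorded in \eqref{wolfe1} is essential: without it one could not deduce that the individual term $|x_k^{(1)}|$ vanishes. Combining, $x_N\to\bar x$ with $\bar x^{(1)}=0$ and $\bar x^{(i)}=x_0^{(i)}-\lim_N S_N$ for $i\ge 2$.
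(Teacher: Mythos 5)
Your proof is correct and takes essentially the same route as the paper's: summing the Armijo reformulation \eqref{arm_simp} for the lower bound, computing $f(x_0)-f(x_N)=(n-1)S_N+a|x_0^{(1)}|-a|x_N^{(1)}|$ from \eqref{osc} and dropping the nonpositive term for the upper bound, and using \eqref{wolfe3} together with $t_k\to 0$ to force $x_N^{(1)}\to 0$. The only immaterial differences are cosmetic: you obtain convergence of the coordinates $i\ge 2$ directly from $x_N^{(i)}=x_0^{(i)}-S_N$ and convergence of $S_N$, where the paper invokes monotonicity plus boundedness below, and you use continuity of $f$ for the converse direction where the paper exhibits an explicit lower bound.
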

\begin{proof} 
Summing up \eqref{arm_simp} from $k=0$ to $k= N-1$ we have
 \beq\label{sumt}
c_1 (a^2+n-1)  S_N   \le f(x_0) - f(x_N).
\eeq 
Using \eqref{osc}  we have 
\[   
x^{(i)}_{0} - x^{(i)}_{N} = \sum^{N-1}_{k=0} (x^{(i)}_{k} - x^{(i)}_{k+1}) = S_N \mathrm{~~for~~} i=2,\hdots,n,
\]   
so
\[ f(x_0) -f(x_N)  =  a|x^{(1)}_0|-a|x^{(1)}_N| + (n-1)S_N .\]
Combining this with \eqref{sumt} and dropping the term $a|x^{(1)}_N|$ we obtain \eqref{f_lb_ub},
so $S_N$ is bounded above if and only if $f(x_N)$ is bounded below.  Now suppose that $f(x_N)$ is bounded below and hence $S_N$ is bounded
above, implying that $t_N\to 0$, and therefore, from \eqref{wolfe3}, that $x^{(1)}_N\to 0$.
Since $f(x_N)=a|x^{(1)}_N| + \sum^{n-1}_{i=2} x^{(i)}_{N}$ 
is bounded below as $N\to\infty$, and since, from \eqref{osc}, for $i=2,\ldots,n$, each $x^{(i)}_N$ is decreasing as $N$ increases,
we must have that each $x^{(i)}_N$ converges to a limit $\bar x^{(i)}$.
On the other hand, if $x_N$ converges to a point $(0,\bar x^{(2)}, \hdots,\bar x^{(n)})$ then $f(x_N)$ is bounded below by  $ \sum^{n-1}_{i=2}  \bar x^{(i)} $.
\end{proof}
Note that, as $f$ is unbounded below, convergence of $x_N$ to a point $(0,\bar x^{(2)}, \hdots,\bar x^{(n)})$ should be interpreted as failure of the method.

We next observe that, because of the bounds \eqref{f_lb_ub}, it is not possible that $S_N\to\infty$ if
\[
   a > \sqrt{(n-1)(1/c_1 -1)}  
\]
(in addition to $a\geq \sqrt{n-1}$ as required by Assumption 1).

It will be convenient to define 
\beq \label{taudef}
\tau =c_1+\dfrac{(n-1)(c_1-1)}{a^2} .
\eeq
Since $c_1\in(0,1)$ and $a \geq \sqrt{n-1}$,
 we have $ -1 < -1+2c_1 < \tau < c_1 < 1$, with $\tau > 0$ equivalent to  $c_1(a^2+n-1) > n-1$.

\begin{corollary}\label{tau_implications}
Suppose $A(t_k)$ and $W(t_k)$ hold for all $k$. If $\tau > 0$ then $f(x_k)$ is bounded below as $k\to\infty$.
\end{corollary}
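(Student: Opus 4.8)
The plan is to extract a single inequality in $S_N$ by chaining together the two halves of the bound \eqref{f_lb_ub} from Theorem~\ref{key_ineq}, thereby eliminating the quantity $f(x_0)-f(x_N)$ entirely. Since $A(t_k)$ and $W(t_k)$ hold for every $k$, the theorem applies for all $N$, giving
\[
c_1(a^2+n-1)S_N \;\le\; f(x_0)-f(x_N) \;\le\; (n-1)S_N + a|x^{(1)}_0|.
\]
Discarding the middle term, the left and right ends yield
\[
\bigl[\,c_1(a^2+n-1)-(n-1)\,\bigr]\,S_N \;\le\; a|x^{(1)}_0|,
\]
in which the right-hand side is a fixed constant independent of $N$.

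Next I would invoke the equivalence recorded just before \eqref{taudef}, namely that $\tau>0$ is exactly the statement $c_1(a^2+n-1)>n-1$. Under the hypothesis $\tau>0$, the bracketed coefficient of $S_N$ is strictly positive, so we may divide to obtain
\[
S_N \;\le\; \frac{a\,|x^{(1)}_0|}{\,c_1(a^2+n-1)-(n-1)\,}
\]
for every $N$. Thus $S_N$ is bounded above as $N\to\infty$. Finally, Theorem~\ref{key_ineq} asserts that $S_N$ is bounded above if and only if $f(x_N)$ is bounded below, so the conclusion that $f(x_k)$ is bounded below follows immediately.

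I do not anticipate a genuine obstacle here: the corollary is essentially a bookkeeping consequence of Theorem~\ref{key_ineq} together with the algebraic reformulation of $\tau>0$. The only point requiring a moment's care is the sign of the coefficient $c_1(a^2+n-1)-(n-1)$, whose strict positivity is precisely what $\tau>0$ buys us and is what converts the two-sided estimate into a uniform upper bound on $S_N$; without this sign the manipulation would be vacuous. An equally valid alternative would be a short contradiction argument—assume $f(x_N)$ is unbounded below, deduce $S_N\to\infty$ from Theorem~\ref{key_ineq}, and observe that the displayed inequality then forces its positive-coefficient left side to infinity against a constant right side—but the direct bound above is cleaner.
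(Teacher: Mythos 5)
Your proof is correct and is essentially the paper's own argument: the paper's proof simply declares the result ``immediate from \eqref{f_lb_ub} and the definition of $\tau$,'' and your chaining of the two sides of \eqref{f_lb_ub}, the identity $a^2\tau = c_1(a^2+n-1)-(n-1)$, and the equivalence in Theorem~\ref{key_ineq} is exactly what that remark compresses. Nothing is missing; you have just written out the steps explicitly.
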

\begin{proof}
This is now immediate from \eqref{f_lb_ub} and the definition of $\tau$. 
\end{proof}
So, the larger $a$ is, the smaller the Armijo parameter $c_1$ must be in order to have $\tau\leq 0$ and therefore the possibility that $f(x_k)\to-\infty$.

At this point it is natural to ask whether  $\tau \le 0$ implies that $f(x_k)\to -\infty$. We will see in the next section (in Corollary \ref{alg1_tau0}, for $\tau=0$)
that the answer is no. However, we can show that there is a specific choice of $t_k$ 
satisfying $A(t_k)$ and $W(t_k)$ for which $\tau\le 0$ implies $f(x_k)\to-\infty$. We start with a lemma.

\begin{lemma} 
\label{armijo_equiv}
 Suppose  $W(t_k)$ holds. Then $A(t_k)$ holds if and only if
 
\beq 
\label{armu}
(1+\tau)\dfrac{at_k}{2} \le |x_k^{(1)}| .
\eeq 
\end{lemma}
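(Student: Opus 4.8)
The plan is to collapse both $A(t_k)$ and the target inequality \eqref{armu} onto a single common inequality by computing the one-step decrease $f(x_k)-f(x_{k+1})$ explicitly, carrying out every manipulation as an equivalence so that the ``if and only if'' falls out automatically. Since $W(t_k)$ is assumed throughout, I would begin by invoking Lemma~\ref{armijo_wolfe_equiv} to replace $A(t_k)$ by its simplified form \eqref{arm_simp}, namely $c_1 t_k(a^2+n-1)\le f(x_k)-f(x_{k+1})$, and then make the right-hand side concrete.

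First I would specialize the decrease identity established in the proof of Theorem~\ref{key_ineq} to a single step. From \eqref{osc} the coordinates $x^{(i)}$ for $i\ge 2$ each decrease by $t_k$, while the first coordinate contributes $a|x_k^{(1)}|-a|x_{k+1}^{(1)}|$, so
\[
f(x_k)-f(x_{k+1}) = a|x_k^{(1)}|-a|x_{k+1}^{(1)}| + (n-1)t_k .
\]
The Wolfe hypothesis enters only to eliminate $|x_{k+1}^{(1)}|$: by the equivalent form \eqref{wolfe3}, $at_k=|x_k^{(1)}|+|x_{k+1}^{(1)}|$, hence $|x_{k+1}^{(1)}|=at_k-|x_k^{(1)}|$. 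Substituting gives $f(x_k)-f(x_{k+1}) = 2a|x_k^{(1)}| - a^2 t_k + (n-1)t_k$.

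Next I would insert this into \eqref{arm_simp} and collect the terms proportional to $t_k$ on the left, obtaining $t_k\big[c_1(a^2+n-1)+a^2-(n-1)\big]\le 2a|x_k^{(1)}|$. The crux — indeed the only step that is not purely mechanical — is to recognize that the bracketed coefficient equals $a^2(1+\tau)$: expanding yields $a^2(1+c_1)+(n-1)(c_1-1)$, and by the definition \eqref{taudef} we have $\tau a^2 = c_1 a^2 + (n-1)(c_1-1)$, so the bracket is $a^2+\tau a^2$. Dividing the inequality $a^2(1+\tau)t_k\le 2a|x_k^{(1)}|$ by $2a>0$ produces precisely \eqref{armu}.

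Finally, since every step is an equivalence — the rewriting of $A(t_k)$ via Lemma~\ref{armijo_wolfe_equiv}, the exact one-step decrease identity, the substitution \eqref{wolfe3}, and division by the positive constant $2a$ — the chain reverses, establishing that under $W(t_k)$ the Armijo condition holds if and only if \eqref{armu} holds. I do not anticipate any genuine obstacle here; the content is a short computation, and the only thing requiring care is matching the collected $t_k$-coefficient to the definition of $\tau$ rather than leaving it in raw form.
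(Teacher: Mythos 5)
Your proof is correct and follows essentially the same route as the paper's: both reduce $A(t_k)$ to \eqref{arm_simp}, write out the one-step decrease $f(x_k)-f(x_{k+1})$, use the Wolfe condition to eliminate $|x_{k+1}^{(1)}|$, and identify the collected $t_k$-coefficient with $a^2(1+\tau)$ via the definition \eqref{taudef}. The only cosmetic difference is that you invoke \eqref{wolfe3} so as to work with absolute values throughout, whereas the paper splits into the cases $x_k^{(1)}>0$ and $x_k^{(1)}<0$; your variant avoids that case split.
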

\begin{proof} 

Suppose $x^{(1)}_{k}  > 0$. Since $W(t_k)$ holds, we can rewrite the Armijo condition \eqref{arm_simp} as
\begin{align*}
c_1t_k(a^2+n-1) & \le f(x_k) - f(x_{k+1})\\
& =   \left(a x_k^{(1)} + \sum_{i=2}^{n}x^{(i)}_k\right) -\left(-a(x_k^{(1)}-at_k)+ \sum_{i=2}^{n}x^{(i)}_k - (n-1)t_k\right)\\
& \LRA  t_k\Big(c_1(a^2+n-1)+a^2 -(n-1)\Big)   \le   2a x_k^{(1)}  \nonumber \\
& \LRA  t_ka^2(\tau+1)   \le   2a x_k^{(1)}  \nonumber,
\end{align*}
giving \eqref{armu}.
A similar argument applies when $x_k^{(1)} < 0$.
\end{proof}

\begin{theorem} \label{tauneg_fdiverges}
Let
\beq   
      t_k = \dfrac{2|x_k^{(1)}|}{(\tau+1)a}.  \label{tkdef}
\eeq
Then  

(1) $A(t_k)$ and $W(t_k)$ both hold. 

(2) if $\tau \le 0$, then $f(x_k)$ is unbounded below as $k\to\infty$.
\end{theorem}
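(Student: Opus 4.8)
The plan is to treat the two parts separately, with part (1) following almost directly from the two preceding lemmas and part (2) resting on an explicit geometric formula for $|x_k^{(1)}|$ together with the bounds already established in Theorem \ref{key_ineq}.

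For part (1), I would first verify the Wolfe condition using its equivalent form \eqref{wolfe2}: substituting \eqref{tkdef} gives $t_k = \frac{2|x_k^{(1)}|}{(\tau+1)a} > \frac{|x_k^{(1)}|}{a}$, which holds precisely because $\frac{2}{\tau+1} > 1$, i.e.\ $\tau < 1$, and this is guaranteed by the bound $\tau < c_1 < 1$ noted after \eqref{taudef}. This presumes $x_k^{(1)} \neq 0$, so I would establish that $x_k^{(1)} \neq 0$ for every $k$ by induction, the base case being $x_0^{(1)} \neq 0$ from Assumption~1 and the inductive step coming from the recursion derived below; note that $x_k^{(1)} \neq 0$ also makes $t_k$ well defined and positive. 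Once $W(t_k)$ is in hand, Lemma \ref{armijo_equiv} applies, and $A(t_k)$ is equivalent to \eqref{armu}, namely $(1+\tau)\frac{at_k}{2} \le |x_k^{(1)}|$; substituting \eqref{tkdef} makes the left-hand side equal to exactly $|x_k^{(1)}|$, so $A(t_k)$ holds with equality. Thus the chosen step sits exactly on the Armijo boundary while strictly satisfying Wolfe.

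For part (2), the key step is to compute $x_{k+1}^{(1)}$ explicitly. From \eqref{osc} and \eqref{tkdef}, $x_{k+1}^{(1)} = x_k^{(1)} - \sgn(x_k^{(1)})a t_k = x_k^{(1)}\bigl(1 - \tfrac{2}{\tau+1}\bigr) = -\tfrac{1-\tau}{1+\tau}\,x_k^{(1)}$. Writing $r = \tfrac{1-\tau}{1+\tau}$, this gives $|x_k^{(1)}| = r^k |x_0^{(1)}|$, and in particular $x_k^{(1)} \neq 0$ for all $k$, which closes the induction used in part (1). Since $\tau \le 0$ forces $r \ge 1$, the factor $r^k$ is nondecreasing, so from \eqref{tkdef} the steps satisfy $t_k = \tfrac{2|x_0^{(1)}|}{(\tau+1)a}\, r^k \ge \tfrac{2|x_0^{(1)}|}{(\tau+1)a} > 0$, a fixed positive constant. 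Consequently $S_N = \sum_{k=0}^{N-1} t_k \to \infty$ as $N \to \infty$.

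Finally I would invoke the left-hand inequality of \eqref{f_lb_ub}, which rearranges to $f(x_N) \le f(x_0) - c_1(a^2+n-1)S_N$; since $c_1(a^2+n-1) > 0$ and $S_N \to \infty$, this yields $f(x_N) \to -\infty$. I expect the main conceptual point to be the recognition that routing the argument through Theorem \ref{key_ineq} avoids having to directly analyze $f(x_N) = a|x_0^{(1)}|r^N + \sum_{i=2}^n x_0^{(i)} - (n-1)S_N$, where for $\tau < 0$ both the $r^N$ term and $S_N$ grow geometrically and one would otherwise need to compare their coefficients; the Armijo bound sidesteps this cancellation entirely.
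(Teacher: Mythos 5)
Your proof is correct and takes essentially the same route as the paper's: part (1) via the Wolfe characterization \eqref{wolfe2} together with Lemma \ref{armijo_equiv}, and part (2) by deriving the geometric recursion $|x_{k+1}^{(1)}| = \frac{1-\tau}{1+\tau}\,|x_k^{(1)}|$, deducing $S_N \to \infty$ when $\tau \le 0$, and invoking Theorem \ref{key_ineq}. The only cosmetic difference is that you obtain the recursion by direct substitution of \eqref{tkdef} into \eqref{osc}, whereas the paper obtains it by observing that $t_k$ is the maximal step satisfying \eqref{armu}, so that \eqref{eq:taup} holds with equality; both arguments are sound.
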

\begin{proof}
The first statement follows immediately from \eqref{wolfe2} (since $|\tau|<1$) and Lemma \ref{armijo_equiv}. Furthermore, \eqref{wolfe3} allows us to write \eqref{armu} equivalently as
 \beq \label{eq:taup}
( 1+\tau) |x_{k+1}^{(1)}| \le  (1-\tau)|x_k^{(1)}|.
\eeq
 Since $t_k$ is the maximum steplength satisfying \eqref{armu}, it follows that \eqref{eq:taup} holds with equality, so 
 $|x_{k+1}^{(1)}| = C|x_k^{(1)}|$, where 
\[
       C=\dfrac{1-\tau}{1+\tau},
\]
and hence 
$$|x_{k+1}^{(1)}| = C^{k+1}|x^{(1)}_0|.$$
Then, we can rewrite \eqref{tkdef} as 
\[
      t_k =\dfrac{2C^k|x^{(1)}_0|}{a(\tau+1)}.
\]  
When $-1< \tau\leq 0$, we have $C\geq 1$,  
so $S_N =\sum_{k=0}^{N-1} t_k \to\infty$ as $N\to\infty$ and hence, by Theorem \ref{key_ineq}, $f(x_N)\to-\infty$.
\end{proof}

\color{black}
\section{Additional Results Depending on a Specific Choice of Armijo-Wolfe Line Search}\label{sec:specific_linesch}

In this section we continue to assume that $f$ and $d_k$ are defined by \eqref{fdef} and \eqref{ddef}
respectively, with $a \geq \sqrt{n-1}$,
and that $A(t)$ and $W(t)$ are defined as earlier. However, unlike in the previous section, we now
assume that $t_k$ is generated by a specific line search, namely the one given in Algorithm~1,
which is taken from \cite[p.~147]{LO13} and is a specific realization of the line searches described implicitly in \cite{POW76a} and explicitly in \cite{HULM}.
Since the line search function $s\mapsto f(x_k+s d_k)$ 
is locally Lipschitz and bounded below, it follows, as shown in \cite{LO13},
that at any stage during the execution of Algorithm~1, the
interval $[\alpha,\beta]$ must always contain a set of points $t$ with nonzero measure satisfying $A(t)$ and $W(t)$,
and furthermore, the line search must terminate at such a point. This defines the point $t_k$.
A crucial aspect of Algorithm~1 is that, in the ``while" loop, the Armijo condition is tested first and the Wolfe
condition is then tested only if the Armijo condition holds.

 \begin{algorithm}
 \caption{(Armijo-Wolfe Bracketing Line Search)}
 \label{alg1}
\begin{algorithmic}
\State $\alpha \leftarrow 0$
\State $\beta \leftarrow +\infty $
\State $t\leftarrow 1$
\While  {true} 
      \If {$A(t) $ fails (see \eqref{armijo_cond})}
            \State $\beta \leftarrow t $
      \ElsIf  {$ W(t) $ fails (see \eqref{wolfe_cond})}
           \State $\alpha \leftarrow t $
      \Else
           \State  stop and return $t$
     \EndIf
     
     \If {$ \beta < +\infty $}
            \State $t \leftarrow (\alpha+\beta)/2$
      \Else
           \State $t \leftarrow 2\alpha$
     \EndIf
\EndWhile
\end{algorithmic}
\end{algorithm}

We already know from Theorem~\ref{key_ineq} and Corollary~\ref{tau_implications} that, for \emph{any} set of Armijo-Wolfe points, if $\tau > 0$, then $f(x_N)$ is bounded below.
In this section we analyze the case $\tau\leq 0$, assuming that the steps $t_k$ are generated by the Armijo-Wolfe bracketing line  search.
It simplifies the discussion to make a probabilistic analysis, assuming
that $x_0=(x^{(1)}_0,x^{(2)}_0, \hdots, x^{(n)}_0)$ is generated randomly, say from the normal distribution.   
Clearly, all intermediate values $t$ generated by Algorithm~1 are rational, and with probability one all corresponding points $x=(x^{(1)}_0 - \sgn(x^{(1)}_0)at, x^{(2)}_0 - t,\hdots,x^{(n)}_0 - t)$ where the Armijo and Wolfe
conditions are tested during the first line search are irrational (this is obvious if $a$ is rational but it also holds if $a$ is irrational assuming that
$x_0$ is generated independently of $a$). It follows that, with probability one, $f$ is differentiable at these points, which include the next iterate $x_1=(x^{(1)}_1,x^{(2)}_1, \hdots, x^{(n)}_1)$.
It is clear that, by induction, the points $x_k=(x_k^{(1)},x^{(2)}_k, \hdots, x^{(n)}_k)$ are irrational with probability one for all $k$, and in particular, $x_k^{(1)}$ is nonzero for all $k$ and hence $f$ is differentiable
at all points $x_k$.

Let us summarize the underlying assumptions for all the results in this section.
\begin{assumption}
Let $f$ be defined by \eqref{fdef}, with $a \geq \sqrt{n-1}$,
  and define $x_{k+1}=x_k+t_k d_k$, with $d_k=-\grad f(x_k)$, and with
$t_k$ defined by Algorithm~1, $k=1,2,3,\ldots$, where $x_k=(x_k^{(1)},x^{(2)}_k, \hdots, x^{(n)}_k)$, and $x_0=(x^{(1)}_0,x^{(2)}_0, \hdots, x^{(n)}_0)$
is randomly generated from the normal distribution. All statements in this section are
understood to hold with probability one.
\end{assumption}

\begin{lemma} 
\label{t_a_w}
Suppose $\tau \le 0$ and suppose $|x_k^{(1)}| > a$. Define
\beq
   r_k = \ceil*{ \log_2 {\dfrac{|x_k^{(1)}|}{a}}} \quad \mathrm{so~that} \quad a2^{r_k-1} < |x_k^{(1)}| < a2^{r_k} . \label{rdef}
\eeq
Then, $t_k= 2^{r_k}$. 
\end{lemma}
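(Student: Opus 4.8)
The plan is to reduce the behavior of Algorithm~1 to a clean description of which steplengths $t$ satisfy each condition, and then to trace the bracketing search explicitly. Write $t^\ast = |x_k^{(1)}|/a$ for the kink of the line-search function. By \eqref{wolfe2}, $W(t)$ holds precisely when $t > t^\ast$. For the Armijo side I would split at $t^\ast$: when $t < t^\ast$ the sign of $x^{(1)}$ does not flip, so by \eqref{osc} one computes $f(x_k)-f(x_{k+1}) = (a^2+n-1)t$, and then \eqref{arm_simp} reads $c_1 \le 1$, which always holds; hence $A(t)$ holds automatically for every $t<t^\ast$ while $W(t)$ fails there. When $t>t^\ast$, the sign flips, $W(t)$ holds, and Lemma~\ref{armijo_equiv} applies, so $A(t)$ is equivalent to \eqref{armu}.

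Next I would run the algorithm from its initial state $\alpha=0$, $\beta=+\infty$, $t=1$. The hypothesis $|x_k^{(1)}|>a$ gives $t^\ast>1$, so at $t=1$ the Armijo test succeeds and the Wolfe test fails; the loop therefore executes the ``$W(t)$ fails'' branch, sets $\alpha\leftarrow 1$, leaves $\beta=+\infty$, and doubles to $t=2$. I would then argue by induction that as long as the current trial point $t=2^{j}$ satisfies $2^{j}<t^\ast$, the same branch is taken: $A(2^j)$ holds by the first paragraph, $W(2^j)$ fails, $\beta$ stays infinite, $\alpha\leftarrow 2^{j}$, and $t$ doubles. Thus $\beta$ remains $+\infty$ throughout the doubling phase, and the trial points are exactly $1,2,4,\dots$; by the definition of $r_k$ in \eqref{rdef}, $2^{r_k}$ is the first of these that exceeds $t^\ast$, while every earlier power of two lies below $t^\ast$.

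It remains to verify that the loop terminates and returns at $t=2^{r_k}$, which is the crux. Since $2^{r_k}>t^\ast$, the Wolfe condition holds. For the Armijo condition I would check \eqref{armu}: it requires $(1+\tau)\,a\,2^{r_k-1} \le |x_k^{(1)}|$, and this follows immediately from the left inequality $a\,2^{r_k-1} < |x_k^{(1)}|$ in \eqref{rdef} together with $1+\tau \le 1$, the latter being exactly the hypothesis $\tau\le 0$. Hence both $A$ and $W$ hold at $t=2^{r_k}$, the ``else'' branch fires, and the search returns $t_k=2^{r_k}$. The main obstacle is precisely this last Armijo check: one must ensure that the very first doubling step to cross $t^\ast$ is not so large that it overshoots the Armijo bound $t_A=\tfrac{2|x_k^{(1)}|}{(1+\tau)a}$, and the margin that makes this work is supplied entirely by $\tau\le 0$. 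I would also note that the probability-one irrationality of $x_k^{(1)}$ guarantees $t^\ast$ is never exactly a power of two, so all the strict inequalities above are legitimate and $r_k$ is well defined.
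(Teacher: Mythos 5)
Your proof is correct and takes essentially the same route as the paper's: both trace Algorithm~1 explicitly, observing that every trial step $t$ below the kink $|x_k^{(1)}|/a$ satisfies $A(t)$ but fails $W(t)$ (so the loop doubles with $\beta$ remaining $+\infty$), and that at $t=2^{r_k}$, the first power of two past the kink, $W(t)$ holds while $\tau\le 0$ supplies exactly the margin needed for \eqref{armu}, so the search returns there. Your version merely fills in details the paper asserts without computation --- notably that $A(t)$ reduces to $c_1\le 1$ below the kink --- so it is a more explicit rendering of the same argument.
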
 
\begin{proof} 
Since $|x_k^{(1)}| > a$ any steplength $t\le |x_k^{(1)}|/a$ satisfies  $A(t)$ but fails $W(t)$. 
 Starting with $t =1$, the ``while" loop in Algorithm  \ref{alg1} will carry out $r_k$ doublings of $t$ until $ t > |x_k^{(1)}|/a$, i.e., $W(t)$ holds. 
Hence, in the beginning of stage $r_k +1$, we have  $\alpha=2^{{r_k}-1}$ (a lower bound on $t_k$), $t=2^{r_k}$ and $\beta = +\infty $.
At this point, $t$ satisfies $W(t)$ and since $\tau \le 0$, it also satisfies \eqref{armu}, i.e. $A(t)$. So $t_k = 2^{r_k}$.
\end{proof}

\begin{theorem}
\label{uk_le_a}
Suppose $\tau \le 0$ and $|x^{(1)}_0| > a$. Then after $j\leq r_0$ iterations we have  $|x_j^{(1)}|<a$, where $r_0$ is defined by \eqref{rdef},
and furthermore, for all subsequent iterations, the condition $|x_k^{(1)}|<a$ continues to hold.
\end{theorem}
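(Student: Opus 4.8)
The plan is to prove the two assertions separately: first, that the number of iterations needed to reach $|x_j^{(1)}| < a$ is at most $r_0$, and second, that this inequality is preserved by one more step, so that it persists for all subsequent $k$ by induction.

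For the first part I would work entirely in the regime $|x_k^{(1)}| > a$, where Lemma~\ref{t_a_w} applies and gives $t_k = 2^{r_k}$ with $a2^{r_k-1} < |x_k^{(1)}| < a2^{r_k}$. The Wolfe identity \eqref{wolfe3} then computes the next first coordinate exactly as $|x_{k+1}^{(1)}| = at_k - |x_k^{(1)}| = a2^{r_k} - |x_k^{(1)}|$, and substituting the lower bound $|x_k^{(1)}| > a2^{r_k-1}$ yields the contraction $|x_{k+1}^{(1)}| < a2^{r_k-1}$. The point of this estimate is that, as long as we remain in the regime $|x^{(1)}| > a$, it forces $r_{k+1} \le r_k - 1$ (via the definition \eqref{rdef} of $r_k$); since each $r_k$ is a positive integer, the sequence $r_0, r_1, \ldots$ is strictly decreasing and bounded below by $1$. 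Such a sequence starting at $r_0$ can have at most $r_0$ terms, so the regime $|x^{(1)}| > a$ persists for at most $r_0$ iterations, forcing $|x_j^{(1)}| < a$ for some $j \le r_0$. (The strictness ``$< a$'' rather than ``$\le a$'' is where I invoke the probability-one irrationality built into Assumption~2.)

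For the second part I need to show that $|x_k^{(1)}| < a$ implies $|x_{k+1}^{(1)}| < a$. The key is to control the step $t_k$ that Algorithm~\ref{alg1} actually returns, and the decisive observation is that when $|x_k^{(1)}| < a$ the Wolfe condition $W(t)$ already holds at the initial trial $t = 1$, since \eqref{wolfe2} reads $t > |x_k^{(1)}|/a$ and $|x_k^{(1)}|/a < 1$. Tracing the ``while'' loop: at $t = 1$ either the Armijo test fails, in which case $\beta$ is set to $1$ and all subsequent trials lie in $(0,1)$, or Armijo holds, in which case Wolfe also holds and the search returns $t_k = 1$ immediately. The extrapolation (doubling) branch, which is the only branch that can push $t$ above $1$, is never entered, because it requires Wolfe to fail. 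Hence $t_k \le 1$. Feeding this into \eqref{wolfe3} once more gives $|x_{k+1}^{(1)}| = at_k - |x_k^{(1)}| \le a - |x_k^{(1)}| < a$, using $|x_k^{(1)}| > 0$, and an immediate induction then shows $|x_k^{(1)}| < a$ for all subsequent $k$.

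The routine computations (the contraction estimate and the final one-line bound) are straightforward consequences of \eqref{wolfe3}; the part that requires care, and which I expect to be the main obstacle, is the algorithmic bookkeeping in the second part, namely establishing $t_k \le 1$ by reasoning about which branches of Algorithm~\ref{alg1} can be visited. The cleanest way to make this airtight is the structural remark that doubling occurs only when Wolfe fails, combined with the fact that Wolfe holds at $t = 1$ throughout the regime $|x_k^{(1)}| < a$; this avoids having to track the bisection dynamics in detail.
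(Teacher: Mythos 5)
Your proof is correct and follows essentially the same route as the paper's: the same contraction estimate $|x_{k+1}^{(1)}| = at_k - |x_k^{(1)}| < a2^{r_k-1}$ via Lemma~\ref{t_a_w} and \eqref{wolfe3}, forcing $r_{k+1} \le r_k - 1$, and the same observation that once $|x_k^{(1)}| < a$, the Wolfe condition holds at the initial trial $t=1$, so Algorithm~\ref{alg1} never extrapolates and returns $t_k \le 1$. Your bookkeeping about which branches of the line search can be visited is slightly more explicit than the paper's, but the argument is identical in substance.
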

\begin{proof}
For any $k$ with $|x_k^{(1)}|  > a$ we know from the previous lemma that $t_k = 2^{r_k}$ with $r_k > 0$. From \eqref{wolfe3} and \eqref{rdef} we get
\beq
 	 |x_{k+1}^{(1)}| =at_k - |x_k^{(1)}| < a2^{r_k} - a 2^{r_k-1} = a 2^{r_k-1}.     \label{ukp1} 
\eeq
See Figure~\ref{fig:armk} for an illustration with $n=2$, with $x_k^{(1)}>0$, so \mbox{$-a2^{r_k-1} < x_{k+1}^{(1)} < 0$}. 
Hence, either $|x_{k+1}^{(1)}|  < a$, or $a < |x_{k+1}^{(1)}| < a 2^{r_k-1}$, in which case
from \eqref{rdef} and \eqref{ukp1} we have 
$$          r_{k+1} \leq r_k-1.          $$
So, beginning with $k=0$, $r_k$ is decremented by at least one at every iteration until $|x_k^{(1)}| < a$.
Finally, once $|x_k^{(1)}|<a$ holds, it follows that the initial step $t=1$ satisfies the Wolfe condition $W(t)$, and hence, if $A(t)$ also holds, $t_k$ is set to one,
while if not, the upper bound $\beta$ is set to one so $t_k < 1$. Hence, the next value $x_{k+1}^{(1)}=x_k^{(1)}-\sgn(x_k^{(1)})a t_k$ also satisfies $|x_{k+1}^{(1)}| < a$.
\end{proof}

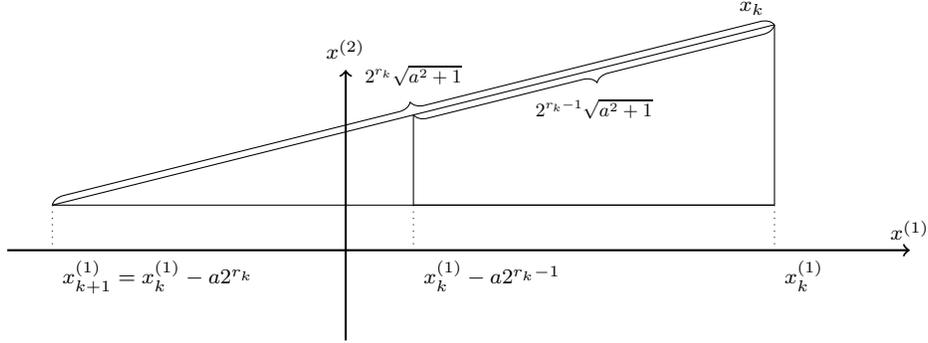
\begin{figure} 
\centering
\begin{turn}{0}
\begin{tikzpicture}[scale=0.3]
\tikzset{anchor=west,font={\fontsize{8pt}{11}\selectfont}}
\draw[thick,<-]  (0,6)node[anchor= south  ]{$x^{(2)}$} -- (0,-6);
\draw[thick,->]  (-15,-2) -- (25,-2)node[anchor= south  ]{$x^{(1)}$};
\draw (19,8) node[anchor= south east]{$x_k$}-- (19,0);
\draw[dotted](19,0)-- (19,-2)node[anchor=north west]{$x_k^{(1)}$};
\draw (3,4)--(3,0);
\draw[dotted](3,0) --(3,-2)node[anchor=north west]{$x_k^{(1)}-a2^{r_k-1}$};
\draw (3,0)-- (19,0)-- (-13,0) ;
\draw[dotted] (-13,0) --(-13,-2) node[anchor=north west]{$ x_{k+1}^{(1)} =x_k^{(1)}-a2^{r_k}$} ;
\draw  (19,8) -- (-13,0) ;
\draw [decorate,decoration={brace,amplitude=5pt}] (-13,0) -- (19,8) node [black,midway,above, scale=0.8,yshift=10pt] {\footnotesize $2^{r_k}\sqrt{a^2+1}$};
\draw [decorate,decoration={brace,amplitude=5pt,mirror}] (3,4) -- (19,8) node [black,midway,below, scale=0.8,yshift=-10pt] {\footnotesize $2^{r_k-1}\sqrt{a^2+1}$};
\end{tikzpicture}
\end{turn}
\caption{Doubling $t$ in order to satisfy $W(t)$.} \label{fig:armk}
\end{figure}

Theorem \ref{uk_le_a} shows that for any $\tau\leq 0$ and sufficiently large $k$ using Algorithm~1 we always have $|x_k^{(1)}| < a$. 
In the reminder of this section we provide further details on the step $t_k$ generated when $|x_k^{(1)}| < a$.
In this case, the initial step $t=1$ satisfies $W(t)$ but not necessarily $A(t)$.  So  Algorithm~1 will repeatedly halve $t$, until it satisfies $A(t)$. 
See Figure \ref{fig:triangle2} for an illustration. 

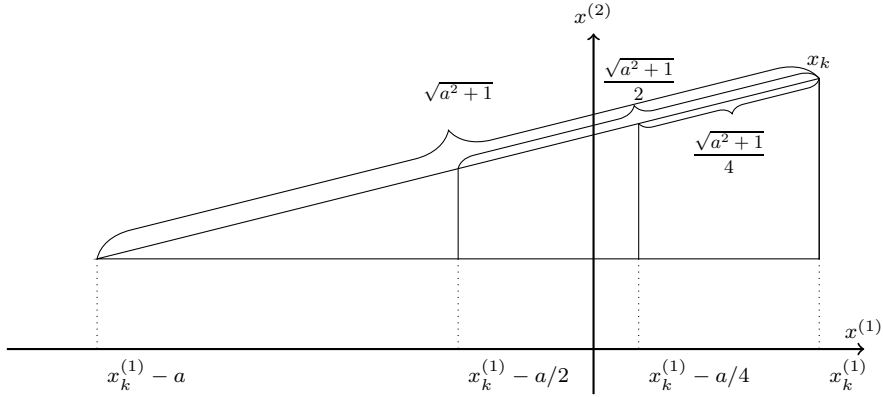
\begin{figure} 
\centering
\begin{turn}{0}
\begin{tikzpicture}[scale=0.6]
\tikzset{anchor=west,font={\fontsize{8pt}{11}\selectfont}}
\draw[thick,<-]  (-2,5)node[anchor= south  ]{$x^{(2)}$} -- (-2,-3);
\draw[thick,->]  (-15,-2)-- (4,-2)node[anchor= south  ]{$x^{(1)}$} ;
\draw (3,4)--(3,0);
 \draw[dotted] (3,0) -- (3,-2) node[anchor=north west]{$x_k^{(1)}$};

\draw (3,0)-- (3,4)node[anchor=south]{$x_{k}$} -- (-13,0)node[anchor=south]{}--(3,0);

\draw[dotted] (-13,0)--(-13,-2) node[anchor=north west]{$x_k^{(1)}-a$} ;
\draw [decorate,decoration={brace,amplitude=15pt,mirror}] (3,4) -- (-13,0) node [black,midway,below, scale=0.8,yshift=45pt] {\footnotesize $\sqrt{a^2+1}$};

\draw  (-5,2) -- (-5,0);
\draw[dotted] (-5,0) --(-5,-2) node[anchor=north west]{$x_k^{(1)}-a/2$};
\draw [decorate,decoration={brace,amplitude=7pt,mirror}] (3,4) -- (-5,2) node [black,midway,below, scale=0.8,yshift=35pt] {\footnotesize $\dfrac{\sqrt{a^2+1}}{2}$};

 \draw  (-1,3) -- (-1,0);
\draw[dotted] (-1,0) --(-1,-2) node[anchor=north west]{$x_k^{(1)}-a/4$};
\draw [decorate,decoration={brace,amplitude=5pt}] (3,4) -- (-1,3) node [black,midway,below, scale=0.8,yshift=-10pt] {\footnotesize $\dfrac{\sqrt{a^2+1}}{4}$};

\end{tikzpicture}
\end{turn}
\caption{Halving $t$ in order to satisfy $A(t)$. }\label{fig:triangle2}
\end{figure}

Suppose for the time being that $\tau=0$ and define $p_k$ by
\beq \label{pk}
     p_k = \ceil*{ \log_2  {\dfrac{a}{|x_k^{(1)}|}}} \quad \mathrm{so~that} \quad \dfrac{a}{2^{p_k}} < |x_k^{(1)}| < \dfrac{a}{2^{p_k-1}}.
\eeq 
For example, in Figure \ref{fig:triangle2}, $p_k = 2$. So, $a/4 < |x_k^{(1)}| < a/2$. Hence $t = 1/2$ satisfies $W(t)$. 
In fact it also satisfies $A(t)$, because for $\tau = 0$, we have 
$$\dfrac{(1+\tau)at}{2} = \dfrac{a}{4} < |x_k^{(1)}|,$$ 
which is exactly the Armijo condition \eqref{armu}. So, Algorithm~1 returns $t_k =1/2$.

On the other hand if we had $ \tau \le -1/2$, $t=1$ would have satisfied the Armijo condition  \eqref{armu} since
$$
\dfrac{ (1+\tau)a}{2} \le \dfrac{a}{4} <  |x_k^{(1)}|. $$
By taking $\tau$ into the formulation we are able to compute the exact value of $t_k$ in the following theorem.

\begin{theorem}\label{min1t}
Suppose $\tau \le 0 $ and $|x_k^{(1)}| < a$.
Then $t_k = \min(1,1/2^{q_k-1})$, where 
\[
               q_k = \ceil*{ \log_2  {\dfrac{(1+\tau)a}{|x_k^{(1)}|}}},  
\]
so 
\beq\label{aulog}
\dfrac{(1+\tau)a}{2^{q_k}} < |x_k^{(1)}| < \dfrac{(1+\tau)a}{2^{q_k-1}}.
\eeq
Note that, unlike $r_k$ and $p_k$, the quantity $q_k$ could be zero or negative.

\end{theorem}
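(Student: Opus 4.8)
The plan is to trace Algorithm~1 directly, exploiting the fact that in the regime $|x_k^{(1)}| < a$ the Wolfe condition reduces to a simple lower bound on $t$ while the Armijo condition reduces to an upper bound, so that the bracketing search degenerates into pure halving from $t=1$.

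First I would record the two thresholds. By \eqref{wolfe2}, $W(t)$ holds precisely when $t > t_W := |x_k^{(1)}|/a$, and since $|x_k^{(1)}| < a$ we have $t_W < 1$; in particular $W(1)$ holds. By Lemma~\ref{armijo_equiv} and \eqref{armu}, once $W(t)$ holds, $A(t)$ holds precisely when $t \le t_A := 2|x_k^{(1)}|/((1+\tau)a)$. Rewriting the defining inequalities \eqref{aulog} of $q_k$ in terms of $t_A$ gives $2^{1-q_k} < t_A < 2^{2-q_k}$, where the inequalities are strict with probability one because $x_k^{(1)}$ is irrational (Assumption~2), so no dyadic value of $t$ ever meets a threshold exactly.

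Next I would run the loop. Starting from $t=1$, the algorithm tests $A$ first; since $W$ holds at every value $t > t_W$ that the loop visits, each failure of $A$ merely sets $\beta \leftarrow t$ and halves $t$, so the visited iterates are exactly $1, 1/2, 1/4, \dots$ and the search returns the first dyadic $2^{-m}$, $m \ge 0$, with $2^{-m} \le t_A$. The bounds $2^{1-q_k} < t_A < 2^{2-q_k}$ locate this value: if $q_k \ge 1$ it is $2^{1-q_k} = 1/2^{q_k-1}$ (every larger power $2^{-m}$ with $m \le q_k-2$ satisfies $2^{-m} \ge 2^{2-q_k} > t_A$ and so is rejected by the Armijo test), while if $q_k \le 0$ then $t_A > 2^{1-q_k} \ge 2 > 1$, so $A(1)$ already holds and the returned value is $t=1$. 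The two cases combine to $t_k = \min(1, 1/2^{q_k-1})$.

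The step I expect to be the main obstacle is confirming that the terminating value still satisfies the Wolfe condition, i.e.\ that the halving does not overshoot below $t_W$. This is exactly where $\tau \le 0$ is needed: from \eqref{aulog}, $t_W = |x_k^{(1)}|/a < (1+\tau)/2^{q_k-1} \le 1/2^{q_k-1}$ because $1+\tau \le 1$, so the returned value $1/2^{q_k-1}$ (and a fortiori $t=1$ in the other case) strictly exceeds $t_W$ and hence satisfies $W$. Equivalently, $\tau \le 0$ forces the gap $t_A/t_W = 2/(1+\tau) \ge 2$, which guarantees that the largest dyadic point not exceeding $t_A$ still lies above $t_W$; without this gap the search could halve past the Wolfe threshold and the clean formula would break down.
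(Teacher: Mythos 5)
Your proof is correct and follows essentially the same route as the paper's: trace Algorithm~1 starting from $t=1$, observe that failures of the Armijo test produce pure halving, identify the first accepted dyadic step via the bracketing inequalities \eqref{aulog}, and use $\tau\le 0$ (via $1+\tau\le 1$) to confirm that this step still satisfies the Wolfe condition, with the $q_k\le 0$ case giving $t_k=1$. Your explicit threshold formulation ($t_W$, $t_A$, and the gap ratio $2/(1+\tau)\ge 2$) is a slightly more systematic packaging of exactly the argument in the paper.
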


\begin{proof}
If $|x_k^{(1)}| > (1+\tau)a/2$, then $t=1$ satisfies the Armijo condition \eqref{armu} as well as the Wolfe condition, so $t_k$ is set to 1.
Otherwise, $q_k > 1$, so $1/2^{q_k-1} < 1$ and 
Algorithm~1 repeatedly halves $t$ until $A(t)$ holds.
We now show that the first $t$ that satisfies $A(t)$ is such that $|x_k^{(1)}| < at$, i.e., it satisfies $W(t)$ as well.  
Since $\tau \le 0$, the second inequality in \eqref{aulog} proves that steplength $t =1/2^{q_k-1} $ satisfies $W(t)$. Moreover, the first inequality is the Armijo condition  \eqref{armu} with the same steplength.
Furthermore, the second inequality in \eqref{aulog} also shows that $t' = 2t = 1/2^{q_k-2}$ is too large to satisfy the Armijo condition  \eqref{armu}.
Hence $t =1/2^{q_k-1} $ is the first steplength satisfying both $A(t)$ and $W(t)$.
So,  Algorithm 1 returns $t_k= 1/2^{q_k-1}$. 
\end{proof}

Note that if $\tau=0$, $p_k$ and $q_k$ coincide, with $p_k \geq 1$ since $|x_k^{(1)}|<a$, and hence $t_k=1/2^{p_k-1}\leq 1.$
Furthermore, $p_k=1$ and hence $t_k=1$ 
when $a/2 < |x_k^{(1)}| < a$.
\begin{corollary} 
\label{alg1_tau0}
Suppose $\tau=0$. Then $x_k$ converges to a limit $\bar x$ with $\bar x^{(1)}=0$.
\end{corollary}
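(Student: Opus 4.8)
The plan is to reduce the statement to the boundedness of $S_N=\sum_{k=0}^{N-1}t_k$ and then invoke Theorem~\ref{key_ineq}, which already establishes that $S_N$ being bounded above (equivalently, $f(x_N)$ being bounded below) is equivalent to $x_N$ converging to a point $\bar x$ with $\bar x^{(1)}=0$. Thus it will suffice to show that, when $\tau=0$, the steplengths $t_k$ generated by Algorithm~1 are summable.

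First I would dispose of the transient phase. By Theorem~\ref{uk_le_a}, after at most $r_0$ iterations we reach an index $k_0$ with $|x_{k_0}^{(1)}|<a$, and thereafter the condition $|x_k^{(1)}|<a$ persists for all $k\ge k_0$. The finitely many steps $t_0,\dots,t_{k_0-1}$ contribute only a finite amount to $S_N$, so it remains to control the tail $\sum_{k\ge k_0}t_k$. For each such $k$, Theorem~\ref{min1t} with $\tau=0$ gives $t_k=1/2^{p_k-1}$, where $p_k\ge 1$ is defined by \eqref{pk}.

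The crux is to show that the exponents $p_k$ strictly increase, so that the tail steps decay geometrically. Using the Wolfe identity \eqref{wolfe3}, $|x_{k+1}^{(1)}|=at_k-|x_k^{(1)}|=a/2^{p_k-1}-|x_k^{(1)}|$, and inserting the window bounds $a/2^{p_k}<|x_k^{(1)}|<a/2^{p_k-1}$ from \eqref{pk} yields $0<|x_{k+1}^{(1)}|<a/2^{p_k}$. By the definition \eqref{pk} this forces $p_{k+1}\ge p_k+1$, and hence $t_{k+1}=1/2^{p_{k+1}-1}\le t_k/2$. Therefore $\sum_{k\ge k_0}t_k$ is dominated by a convergent geometric series, $S_N$ is bounded above as $N\to\infty$, and the conclusion follows from Theorem~\ref{key_ineq}.

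I expect the only real obstacle to be this geometric-decay step: one must combine the exact formula $t_k=1/2^{p_k-1}$ with the Wolfe identity \eqref{wolfe3} and the window bounds on $|x_k^{(1)}|$ just so, and in particular confirm that the boundary case $p_k=1$ (where $t_k=1$ and $|x_k^{(1)}|\in(a/2,a)$) is swept up by the same inequality $|x_{k+1}^{(1)}|<a/2^{p_k}$. Everything else is routine bookkeeping layered on top of the already-established Theorems~\ref{uk_le_a}, \ref{min1t}, and \ref{key_ineq}.
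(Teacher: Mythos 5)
Your proposal is correct and takes essentially the same route as the paper's own proof: after the transient phase from Theorem~\ref{uk_le_a}, it uses Theorem~\ref{min1t} (with $q_k=p_k$ when $\tau=0$) to write $t_k=1/2^{p_k-1}$, combines \eqref{wolfe3} with the window bounds \eqref{pk} to get $|x_{k+1}^{(1)}|<a/2^{p_k}$, hence $p_{k+1}\ge p_k+1$ and $t_{k+1}\le t_k/2$, and concludes that $S_N$ is bounded so Theorem~\ref{key_ineq} applies. The boundary case $p_k=1$ that you flagged is indeed swept up by the same inequality, just as in the paper's argument.
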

\begin{proof} 
Assume that $k$ is sufficiently large so that $ | x_k^{(1)}| < a $. 
From \eqref{pk} we have $ a/2^{p_k} < | x_k^{(1)}| $.
Using Theorem~\ref{min1t} we have $t_k = 1/2^{p_k-1}$ and therefore 
\[ 
       | x_{k+1}^{(1)}| = at_k - | x_k^{(1)}| <  \frac{a}{2^{p_k-1}} - \frac{a}{2^{p_k}} = \frac{a}{2^{p_k}}
\]
(see Figure \ref{fig:triangle2} for an illustration). So $p_{k+1} \ge p_k+1$.
 Using Theorem~\ref{min1t} again we conclude  $t_{k+1} \le  1/2^{p_k}$ and so $t_{k+1} \le t_k/2 $. The same argument holds for all subsequent iterates so  $S_N=\sum_{k=0}^{N-1} t_k$ is bounded above as  $N\to\infty$. The result therefore follows from Theorem \ref{key_ineq}.

\end{proof}
\begin{corollary} \label{tau_half}
If $\tau \le -0.5 $ then eventually $t_k = 1$ at every iteration, and $f(x_k)\to -\infty$.
\end{corollary}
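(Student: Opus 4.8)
The plan is to reduce the whole statement to the single claim that $t_k=1$ for all sufficiently large $k$. Once this is established, the divergence is immediate: if $t_k=1$ for all $k\ge K$, then $S_N=\sum_{k=0}^{N-1}t_k$ grows at least linearly, so $S_N\to\infty$, and Theorem~\ref{key_ineq} turns unboundedness of $S_N$ into $f(x_N)\to-\infty$. So the real work is to track the one-dimensional quantity $u_k:=|x_k^{(1)}|$. By Theorem~\ref{uk_le_a} I may assume $k$ is large enough that $u_k<a$ and stays so, and then Theorem~\ref{min1t} gives $t_k=\min(1,2^{-(q_k-1)})$, so that $t_k=1$ exactly when $q_k\le 1$, i.e.\ when $u_k\ge(1+\tau)a/2$; note $\tau\le-1/2$ forces this threshold to be at most $a/4$. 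When $t_k=1$, the Wolfe identity \eqref{wolfe3} yields the simple recursion $u_{k+1}=a-u_k$.

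First I would introduce the interval $I=[(1+\tau)a/2,\,(1-\tau)a/2]$ and verify two facts: on $I$ we have $u_k\ge(1+\tau)a/2$, hence $t_k=1$; and the map $u\mapsto a-u$ sends $I$ into itself. Thus once an iterate lands in $I$ it stays there with $t_k=1$ forever, and we are done. It then remains to show $u_k$ reaches $I$ in finitely many steps. The only steps with $t_k<1$ are those with $q_k\ge 2$; for such a step I would combine the bounds \eqref{aulog} on $u_k$ with $u_{k+1}=a\,t_k-u_k=a2^{-(q_k-1)}-u_k$ to obtain $a2^{-q_k}<u_{k+1}<(1-\tau)a2^{-q_k}$. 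Two consequences follow. Since $\tau>-1$ under Assumption~1, the upper bound gives $u_{k+1}<a/2$. Since $\tau\le-1/2$ makes $1+\tau\le 1/2$, the lower bound gives $(1+\tau)a/u_{k+1}<2^{q_k-1}$, hence $q_{k+1}\le q_k-1$. Therefore every step with $t_k<1$ strictly decreases the integer $q_k$, so after finitely many such steps we reach $q_k\le1$, i.e.\ $t_k=1$; and because each $t_k<1$ step outputs $u_{k+1}<a/2<(1-\tau)a/2$, the iterate at which we first attain $t_k=1$ already lies in $I$.

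The loose end I would treat as the main obstacle is a transient in which $t_k=1$ but the iterate lies \emph{above} $I$, i.e.\ $u_k\in((1-\tau)a/2,a)$; this can happen for the first iterate entering $\{u<a\}$. Such a step gives $u_{k+1}=a-u_k<(1+\tau)a/2$, dropping into the $t_k<1$ regime, after which the $q_k$-decrease argument runs and deposits the iterate in $I$; and since every $t_k<1$ step produces $u_{k+1}<a/2$, we can never re-enter the region above $I$, so at most one such transient step occurs. I would also flag the boundary case $\tau=-1/2$, where the multiplicative growth factor of $u_k$ in the $t_k<1$ regime degenerates to exactly $1$: here the termination cannot rely on geometric growth of $u_k$ but must use the \emph{integer} decrease $q_{k+1}\le q_k-1$, which is precisely what the bound $u_{k+1}>a2^{-q_k}$ delivers. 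Assembling these pieces gives $t_k=1$ for all large $k$, and hence $f(x_k)\to-\infty$ by Theorem~\ref{key_ineq}.
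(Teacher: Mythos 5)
Your proof is correct and follows essentially the same route as the paper's: reduce to showing $t_k=1$ eventually (via Theorem~\ref{key_ineq}), use Theorems~\ref{uk_le_a} and~\ref{min1t}, and in the $t_k<1$ regime derive exactly the paper's two key facts, $q_{k+1}\le q_k-1$ and $u_{k+1}<a/2$, to force finitely many sub-unit steps. The only difference is packaging: you phrase the terminal phase as invariance of the interval $\bigl[(1+\tau)a/2,(1-\tau)a/2\bigr]$ under $u\mapsto a-u$, while the paper observes that two consecutive above-threshold iterates produce the period-2 cycle $x^{(1)}_{\bar k+2}=x^{(1)}_{\bar k}$ --- the same observation in different form, with your version handling the transient case (an iterate above the interval) slightly more explicitly than the paper's index-shifting remark.
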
 
\begin{proof}
As we showed in Theorem \ref{uk_le_a}, for sufficiently large $k$,  $| x_k^{(1)}| < a $ and therefore $t=1$ always satisfies the Wolfe condition, so $t_k \le 1$.
 If  $| x_k^{(1)}|>(1+\tau)a/2$, then $t=1$ also satisfies the Armijo condition \eqref{armu}, so $t_k=1$. 
If $| x_{k+1}^{(1)}|> (1+\tau)a/2$ as well, then $t_{k+1}=1$ and hence $ x_{k+2}^{(1)}= x_k^{(1)}$. It follows that $t_j=1$ for all $j>k+1$. Hence, by Theorem \ref{key_ineq}, $f(x_k)\to -\infty$. Otherwise, suppose $| x_k^{(1)}| < (1+\tau)a/2$ (in case $| x_k^{(1)}| > (1+\tau)a/2$ and $| x_{k+1}^{(1)}| < (1+\tau)a/2$ 
just shift the index by one so that we have $| x_{k-1}^{(1)}| > (1+\tau)a/2$ and $| x_k^{(1)}| < (1+\tau)a/2$).  

 Since $| x_k^{(1)}| < (1+\tau)a/2$, from the definition of $q_k$ in  \eqref{aulog}  we conclude that $2 \le q_k$, i.e. $ 1/2^{q_k-1} \le 1/2 $, so from Theorem~\ref{min1t} we have $t_k=1/2^{q_k-1}\le 1/2$. Since $| x_k^{(1)}| < (1+\tau)a/2^{q_k-1} $ and $1+\tau \le 1/2 $ we have
\beq
      | x_k^{(1)}| < \frac{a}{2^{q_k}}.   \label{ukbd}
\eeq
So by \eqref{wolfe3}
\beq
                   | x_{k+1}^{(1)}|= at_k - | x_k^{(1)}| \ge  \dfrac{a}{2^{q_k-1}} - \dfrac{a}{2^{q_k}} =  \dfrac{a}{2^{q_k}} >       \dfrac{(1+\tau)a}{2^{q_k-1}}  \label{ukp1bd}  
\eeq
and using \eqref{aulog} again we conclude  $q_{k+1} \le q_k -1.$ So, 
\[
        t_{k+1} = \min\left(1, \frac{1}{2^{q_{k+1}-1}}\right) \geq  \min\left(1, \frac{1}{2^{q_k-2}}\right) = \frac{1}{2^{q_k-2}} = 2t_k 
\]
and therefore, applying this repeatedly, after a finite number of iterations, say at iteration ${\bar k}$, we must have  $t_{\bar k} =1$ for the first time.  Furthermore, from \eqref{ukbd} and \eqref{ukp1bd} we have $ | x_k^{(1)}| <| x_{k+1}^{(1)}| $, and applying this repeatedly as well we have $ | x_{\bar k}^{(1)}| < | x_{\bar k+1}^{(1)}| $.  From the Armijo condition \eqref{armu} at iteration ${\bar k }$ we have $ (1+\tau)a/2 \le | x_{\bar k }^{(1)}|$ and therefore
$$     \frac{(1+\tau)a}{2} <  | x_{\bar k+1 }^{(1)}|. $$
Hence, $t=1$ also satisfies the Armijo condition  \eqref{armu} at iteration $\bar k+1$.  With $t_{\bar k} =1$ and $t_{\bar k + 1} = 1$ , we conclude $ x_{\bar k+2}^{(1)}= x_{\bar k}^{(1)}$. It follows that $t_j = 1$ for all $j> \bar k+1$. Hence $f(x_k)\to -\infty$ by Theorem \ref{key_ineq}.
\end{proof}

\color{black}
\section{Experimental Results}\label{sec:expt_results}
In this section we again continue to assume that $f$ and $d_k$ are defined by \eqref{fdef} and \eqref{ddef} respectively. For simplicity we also assume that $n=2$, writing $u=x^{(1)}$ and $v=x^{(2)}$ for convenience.
Our experiments confirm the theoretical results presented in the previous sections and provide some additional insight. 
We know from Theorem~\ref{key_ineq} that when the gradient algorithm fails, i.e, $x_k$ converges to a point $(0,\bar v)$,
the step $t_k$ converges to zero.  However, an implementation of Algorithm~1 in floating point arithmetic must terminate the
``while" loop after it executes a maximum number of times. We used the \matlab\ implementation in {\sc hanso}\footnote{www.cs.nyu.edu/overton/software/hanso},
which limits the number of bisections in the ``while" loop to 30. 

Figure \ref{fig:suc_fai} shows two examples of minimizing $f$ with $a=2$ and $a=5$, with $c_1=0.1$ in both cases, and hence with $\tau<0$ and $\tau > 0$,  respectively.  Starting from the same randomly generated point, we have $f(x_k)\to-\infty$ (success) when $\tau<0$ and $x_k\to$ $(0,\bar v)$ (failure) when $\tau>0$. 
\begin{figure} 
    \centering
    \subfloat[]{\includegraphics[scale=0.5]{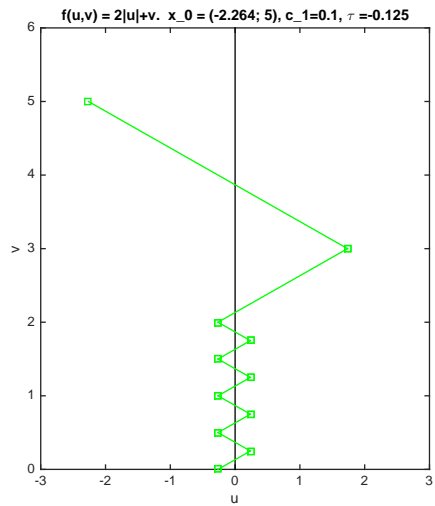}} 
    \subfloat[]{\includegraphics[scale=0.5]{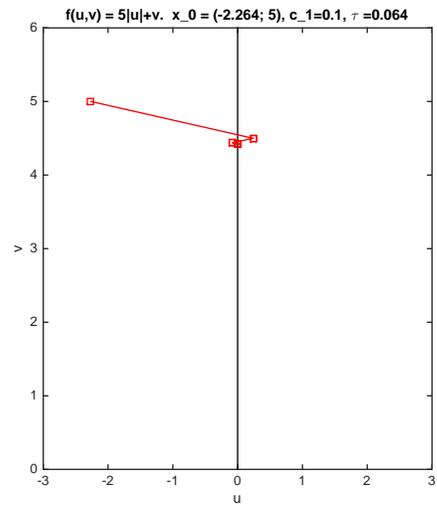} }
    \caption{ Minimizing $f$ with $n=2$, $u=x^{(1)}$, $v=x^{(2)}$ and $c_1=0.1$. \textbf{Left}, with $a = 2$, so $\tau < 0$ and $f(u_k,v_k) \rightarrow -\infty$ (success).
                   \textbf{Right}, with $a = 5$, so $\tau>0$ and $ (u_k,v_k) \rightarrow (0,\bar v)$ (failure).}
    \label{fig:suc_fai}
\end{figure}

For various choices of $a$ and $c_1$ we generated 5000 starting
points $x_0=(u_0,v_0)$, each drawn from the normal distribution with mean 0 and variance 1,
and measured how frequently ``failure" took place, meaning that the line search failed to find an Armijo-Wolfe point within 30 bisections.
If failure did not take place within 50 iterations, i.e., with $k\leq 50$, we terminated the gradient method declaring success.
Figure \ref{fig:afail} shows the failure rates  when (top) $c_1$ is fixed to $0.05$ and $a$ is varied and  
(bottom) when $a = \sqrt 2$ and  $c_1$ is varied. 
Both cases confirm that when $\tau > 0$ the method always fails, as predicted by Corollary \ref{tau_implications},
while when $\tau\le -0.5$, failure does not occur, as shown in Corollary \ref{tau_half}.

\begin{figure} 
    \centering
    \includegraphics[scale=0.63]{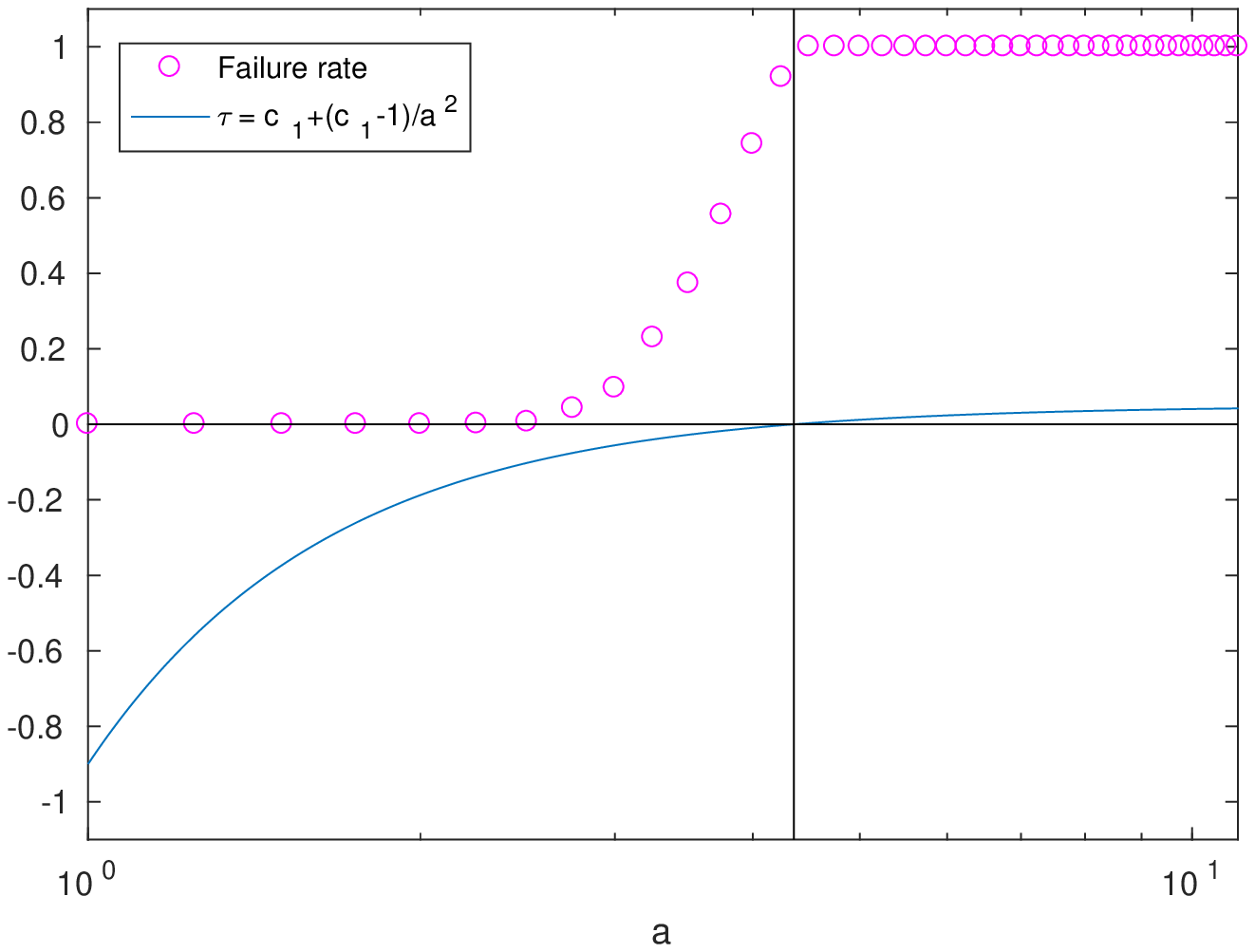} 
    \includegraphics[scale=0.66]{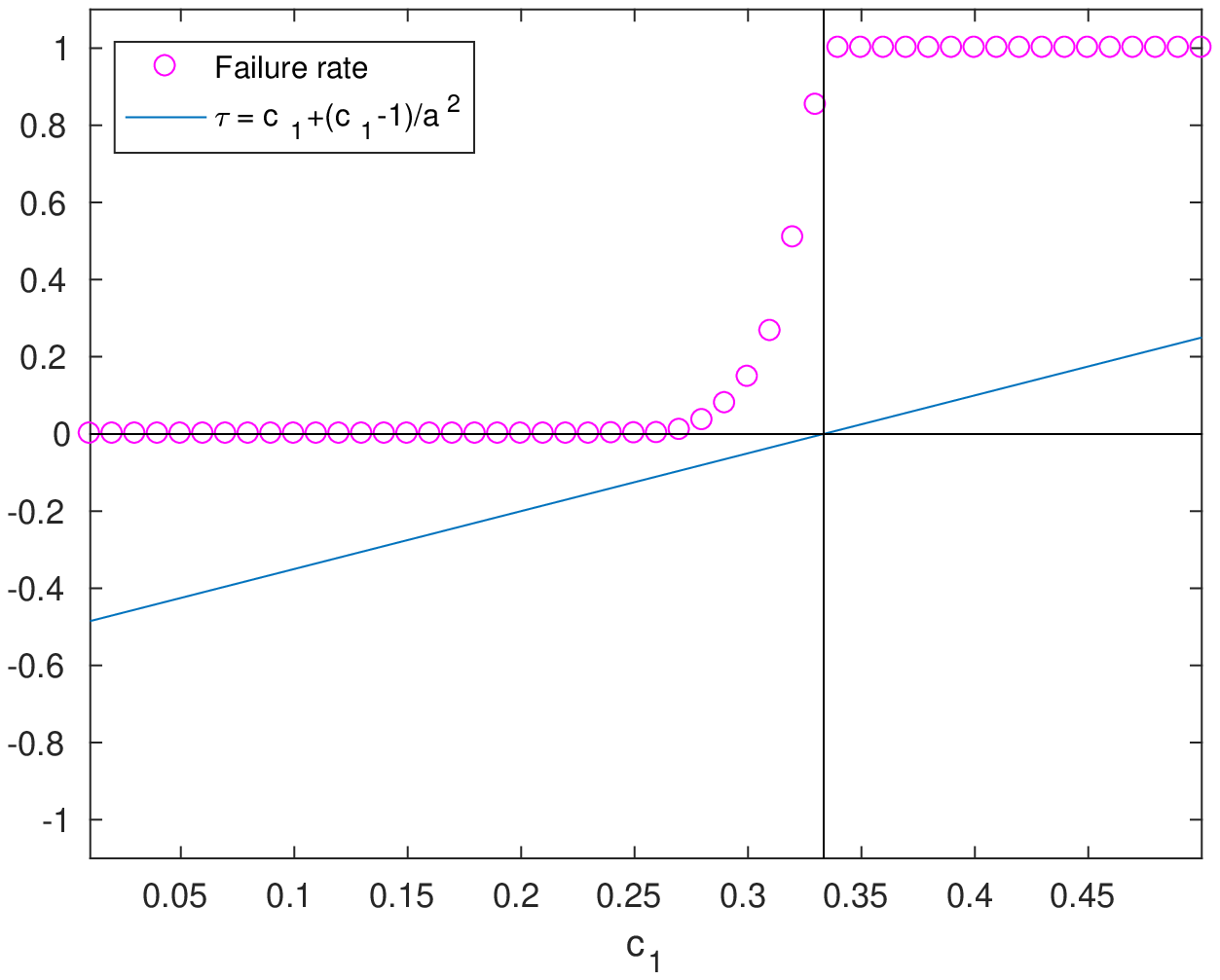} 
    \caption{Failure rates (small circles) for $f$ with $n=2$ when  (top) $c_1$ is fixed to $0.05$ and $a$ is varied and
                   (bottom) $a$ is fixed to $\sqrt 2$ and $c_1$ is varied. 
                   The solid curves show the value of $\tau$. Each experiment was repeated 5000 times.
                   }
    \label{fig:afail}
\end{figure}

\begin{figure}
\centering
\includegraphics[scale=0.4]{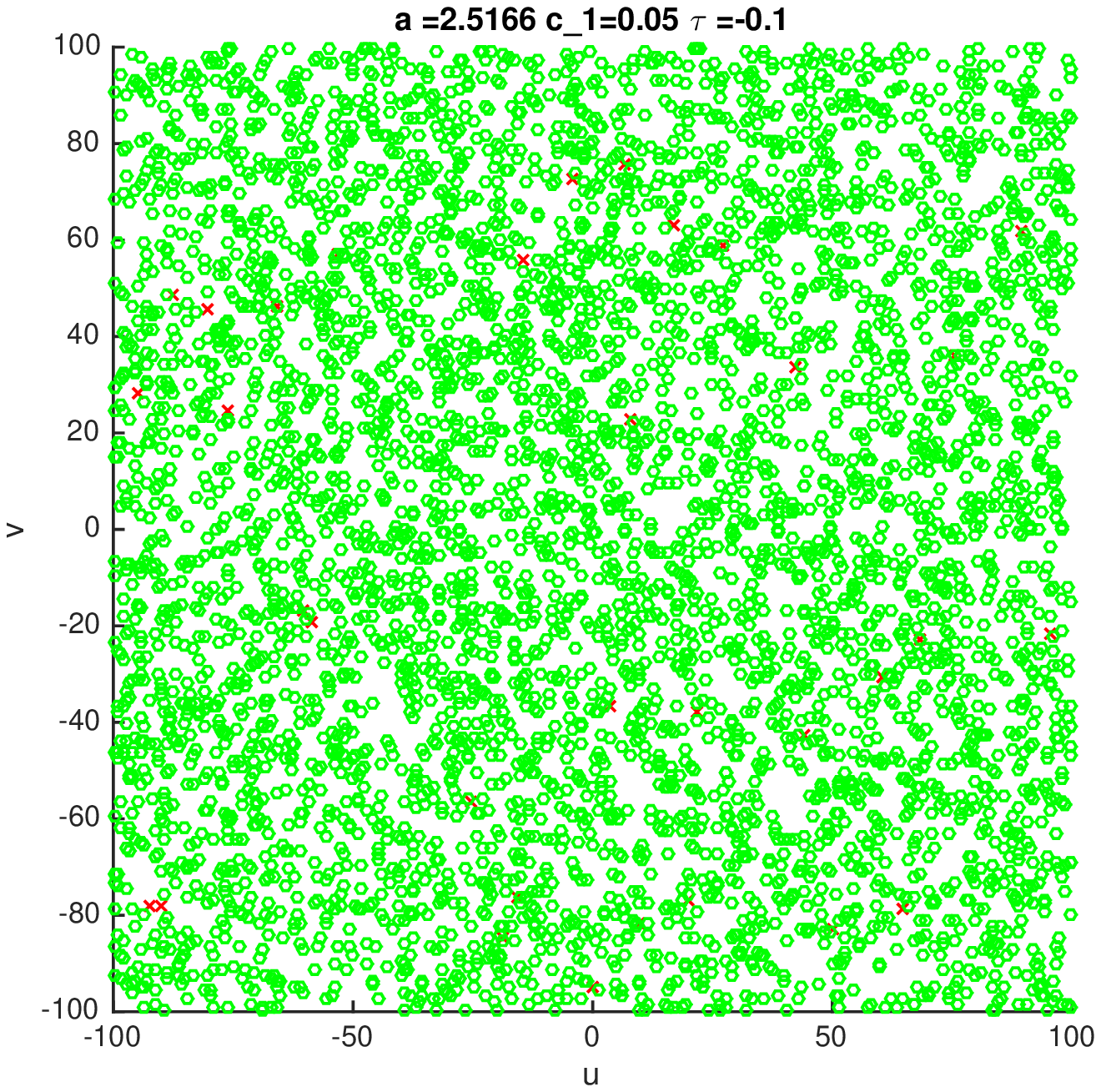}\\
\includegraphics[scale=0.4]{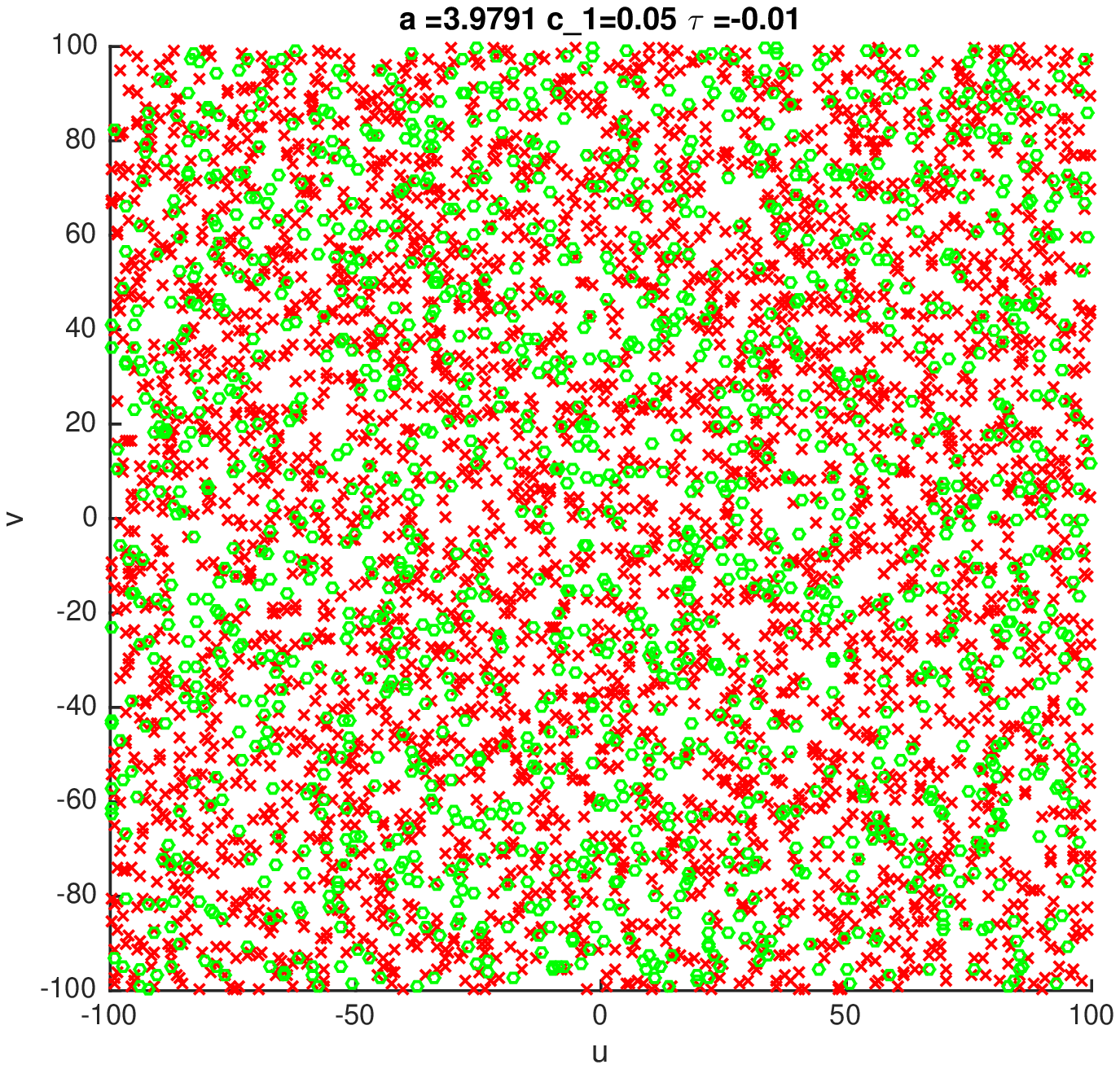}\\
\includegraphics[scale=0.4]{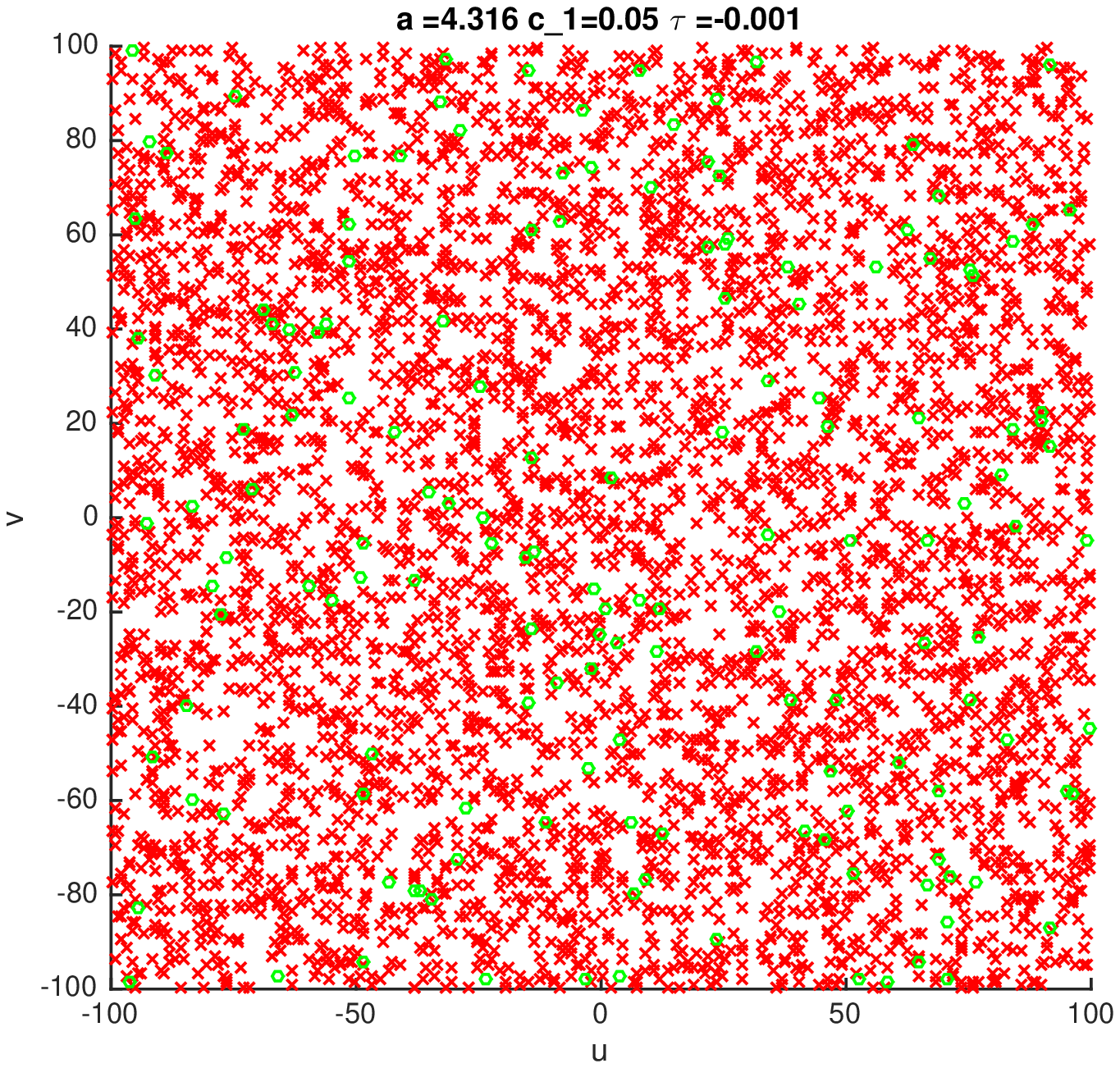}
\caption{Mixed success and failure when $\tau$ is negative but close to zero. Each plot shows 5000 points.
The green circles show starting points for which the method succeeded, generating $x_k =(u_k,v_k)
\in \R^2$ for which $f(x_k)$ is apparently
unbounded below, while the red crosses show starting points for which the method failed, generating $x_k$ converging to
a point on the $v$-axis.}
\label{fig:scat}
\end{figure}

As Figure  \ref{fig:afail}  shows, when $\tau<0$ with $|\tau|$ small, the method may or may not fail, with failure more likely the closer $\tau$ is to zero.
Further experiments for three specific values of $\tau$, namely $-0.1, -0.01$ and $-0.001$, using a fixed value of $c_1=0.05$ and $a$ defined
by $a = \sqrt{(1-c_1)/(c_1 -\tau)}$, confirmed that failure is more likely the closer that $\tau$ gets to zero and also showed that the set of initial points 
from which failure takes place is complex; see Figure~\ref{fig:scat}. The initial points were drawn uniformly from the box $(-100,100)\times(-100,100)$.

We know from Corollary \ref{alg1_tau0}  that, for $\tau=0$, with probability one $t_k\to 0$, so even if high precision
were being used, for sufficiently large $k$ an implementation in floating point must fail.
It may well be the case that failures for $\tau < 0$ occur only 
because of the limited precision being used, and that with sufficiently high precision, these failures would be eliminated.  
This suggestion
 is supported by experiments done reducing the maximum number of bisections to 15, for which the number of failures for $\tau<0$ increased significantly,
and increasing it to 50, for which the number of failures decreased significantly.


\section{Relationship with Convergence Results for Subgradient Methods} \label{sec:subgradient}

Let $h$ be any convex function. The 
subgradient method \cite{SHOR,BERT}
applied to $h$ is a generalization of the gradient method, where $h$ is not assumed to be differentiable at the iterates $\{x_k\}$
and hence, instead of setting $-d_k=\grad h(x_k)$, one defines $-d_k$ to be any element of the
subdifferential set 
\[
       \partial h(x_k) = \big \{g: h(x_k+z) \geq h(x_k) + g^T z ~\forall z \in \R^n \big \}.
 \]
The steplength $t_k$ in the subgradient method is not determined dynamically, as in an Armijo-Wolfe line
search, but according to a predetermined rule. The advantages of the subgradient method with predetermined
steplengths are that it is robust, has low iteration cost, and has a well established convergence theory that does not
require $h$ to be differentiable at the iterates $\{x_k\}$, but the disadvantage is that convergence is usually slow. 
Provided $h$ is differentiable at the iterates, the subgradient method reduces to the gradient method with the
same stepsizes, but it is not necessarily the case that $f$ decreases at each iterate.

We cannot apply the convergence theory of the subgradient method directly to our function $f$ defined in
\eqref{fdef}, because $f$ is not bounded below.
However, we can argue as follows. Suppose that $\tau > 0$, so that we know (by Corollary~\ref{tau_implications})
that for all $x_0$ with $x_0^{(1)}\neq 0$, the iterates $x_k$ generated 
by the gradient method with Armijo-Wolfe steplengths applied to $f$ converge to a point $\bar x$ with $\bar x^{(1)}=0$.
Fix any initial point $x_0$ with $x_0^{(1)}\neq 0$, and let $M = f(\bar x)$, where $\bar x$ is the 
resulting limit point (to make this well
defined, we can assume that the Armijo-Wolfe bracketing line search of Section~\ref{sec:specific_linesch} is in use).
Now define
\[
\tilde f(x) = \max \Big (M-1, a|x^{(1)}| + \sum_{i=2}^{n} x^{(i)} \Big).
\]
Clearly, the iterates generated by the gradient method with Armijo-Wolfe steplengths initiated at $x_0$
are identical for $f$ and $\tilde f$, with $f$ (equivalently, $\tilde f$) differentiable at all iterates $\{x_k\}$,
and with $f(x_k)=\tilde f(x_k) \to M$.
Furthermore, the theory of subgradient methods applies to $\tilde f$. 
One well-known result states that provided the steplengths $\{t_k\}$
are square-summable (that is, $\sum_{k=0}^\infty t_k^2 < \infty$, and hence the steps are ``not too long"),
but not summable (that is, $\sum_{k=0}^\infty t_k = \infty$, and hence the steps are ``not too short"),
then convergence of $\tilde f(x_k)$ to the optimal value $M-1$ must take place \cite{NB01}. Since this does \emph{not}
occur, we conclude that the Armijo-Wolfe steplenths $\{t_k\}$ do \emph{not} satisfy these conditions.
Indeed, the ``not summable" condition is exactly the condition $S_N\to\infty$, where $S_N=\sum_{k=0}^{N-1} t_k$, and
Theorem~\ref{key_ineq} established that the converse, that $S_N$ is bounded above, is equivalent to the function values 
$f(x_k)$ being bounded below.  This, then, is consistent with
the convergence theory for the subgradient method, which says that the steps must not be ``too short"; in
the context of an Armijo-Wolfe line search, when $c_1$ is not sufficiently small, and hence $\tau > 0$,
the Armijo condition is too restrictive: it is causing the $\{t_k\}$ to be ``too short" and hence summable.

Of course, in practice, one usually optimizes functions that are bounded below, but one hopes that a method applied
to a convex function that is not bounded below will not converge, but will generate points $x_k$ with $f(x_k)\to -\infty$.
The main contribution of our paper is to show that, in fact, this does not happen for a simple well known method
on a simple convex nonsmooth function, \emph{regardless of the starting point,}
unless the Armijo parameter is chosen to be sufficiently small --- how small, one does not
know without advance information on the properties of $f$.

\color{black}

\section{Concluding Remarks}\label{sec:conclusion}

Should we conclude from the results of this paper that, if the gradient method with an Armijo-Wolfe line search
is applied to a nonsmooth function, the Armijo parameter $c_1$ should be chosen to be small? 
Results for a very ill-conditioned convex nonsmooth function $\hat f$ devised 
by Nesterov \cite{YN16} suggest that the answer is yes.
The function is defined by
\[
           \hat f(x) = \max\{|x_1|, |x_i-2x_{i-1}|, i=2,...,n\}.
 \]
 Let $\hat x_1=1, \hat x_i = 2\hat x_{i-1} + 1, i=2,...,n$.
 Then $\hat f(\hat x) = 1 = \hat f(\mathbb{1})$ although $\|\hat x\|_\infty \approx 2^n$ and $\|\mathbb{1}\|_\infty=1$,
so the level sets of $\hat f$ are very ill conditioned.
 The minimizer is $x=0$ with $\hat f(x)=0$. Figure~\ref{fig:yuri_les_houches} shows 
 function values computed by applying five different methods to minimize $\hat f$ with $n=100$.
 The five methods are: the subgradient method with $t_k=1/k$, a square-summable but not summable sequence that
 guarantees convergence;
 the gradient method using the Armijo-Wolfe bracketing line search of Section~\ref{sec:specific_linesch};
the limited memory BFGS method \cite{NW06} with 5 and 10 updates respectively (using ``scaling"); and the full BFGS method \cite{NW06, LO13}; the BFGS variants also use the  
 same Armijo-Wolfe line search.\footnote{In our implementation, we made no
 attempt to determine whether $\hat f$ is differentiable at a given point or not. This is essentially
 impossible in floating point arithmetic, but
 as noted earlier, the gradient is defined at randomly generated
 points with probability one; there is no reason to suppose that any of the methods tested will
 generate points where $\hat f$ is not differentiable, except in the limit, and hence the ``subgradient"
 method actually reduces to the gradient method with $t_k=1/k$.
 See \cite{LO13} for further discussion.}
The top and bottom plots in Figure~\ref{fig:yuri_les_houches} show
 the results when the Armijo parameter $c_1$ is set to 0.1 and to $10^{-6}$ respectively.
 The Wolfe parameter was set to $0.5$ in both cases.
 These values were chosen to satisfy the usual
 requirement that $0<c_1<c_2<1$, while
 still ensuring that $c_1$ is not so tiny that it is effectively zero in floating point arithmetic.
All function values generated by the methods are shown, including those evaluated in the line search.
 The same initial point, generated randomly, was used for all methods; the results
using other initial points were similar. 

\begin{figure} 
    \centering
    \includegraphics[scale=0.3]{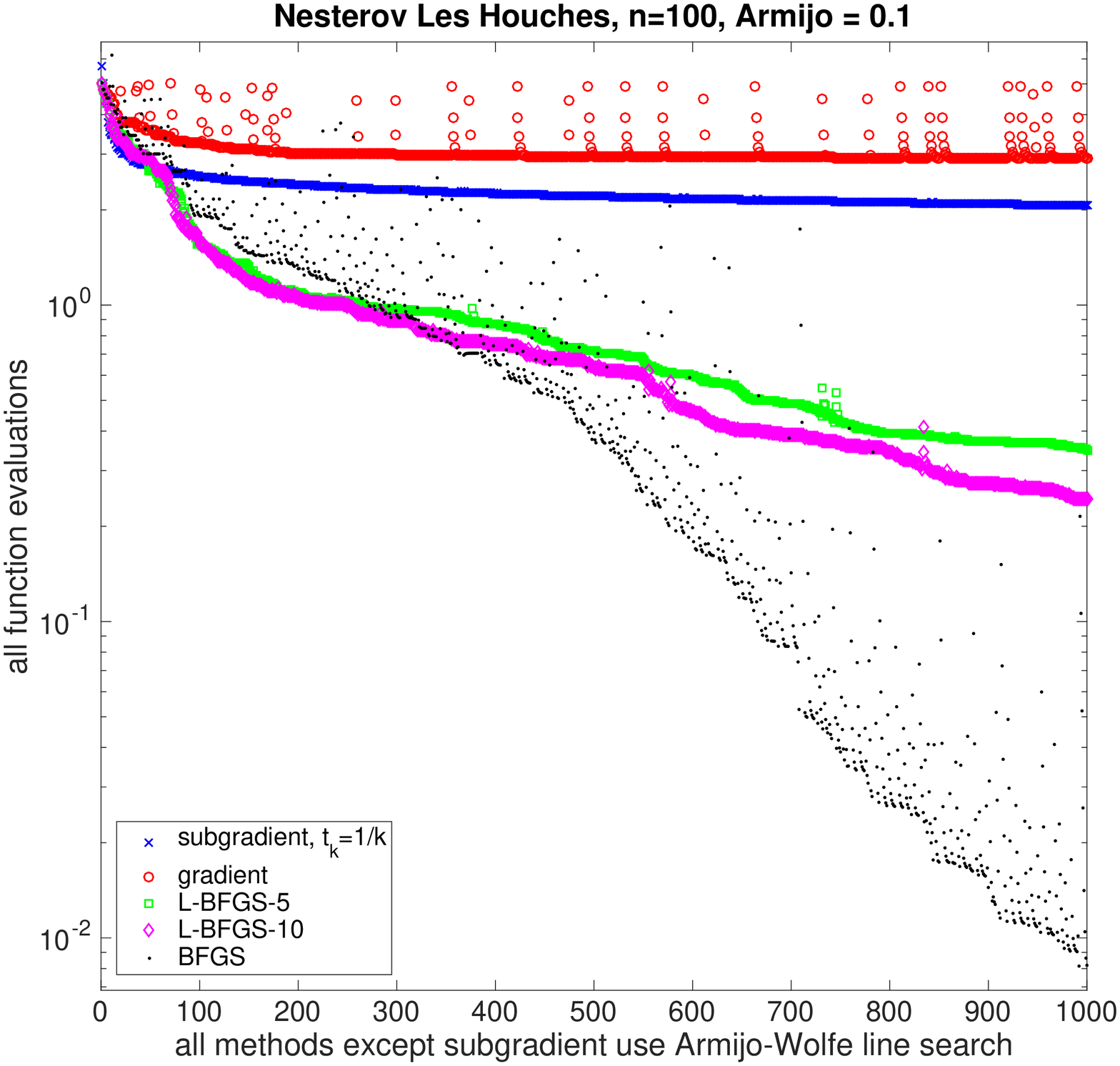}
    \includegraphics[scale=0.3]{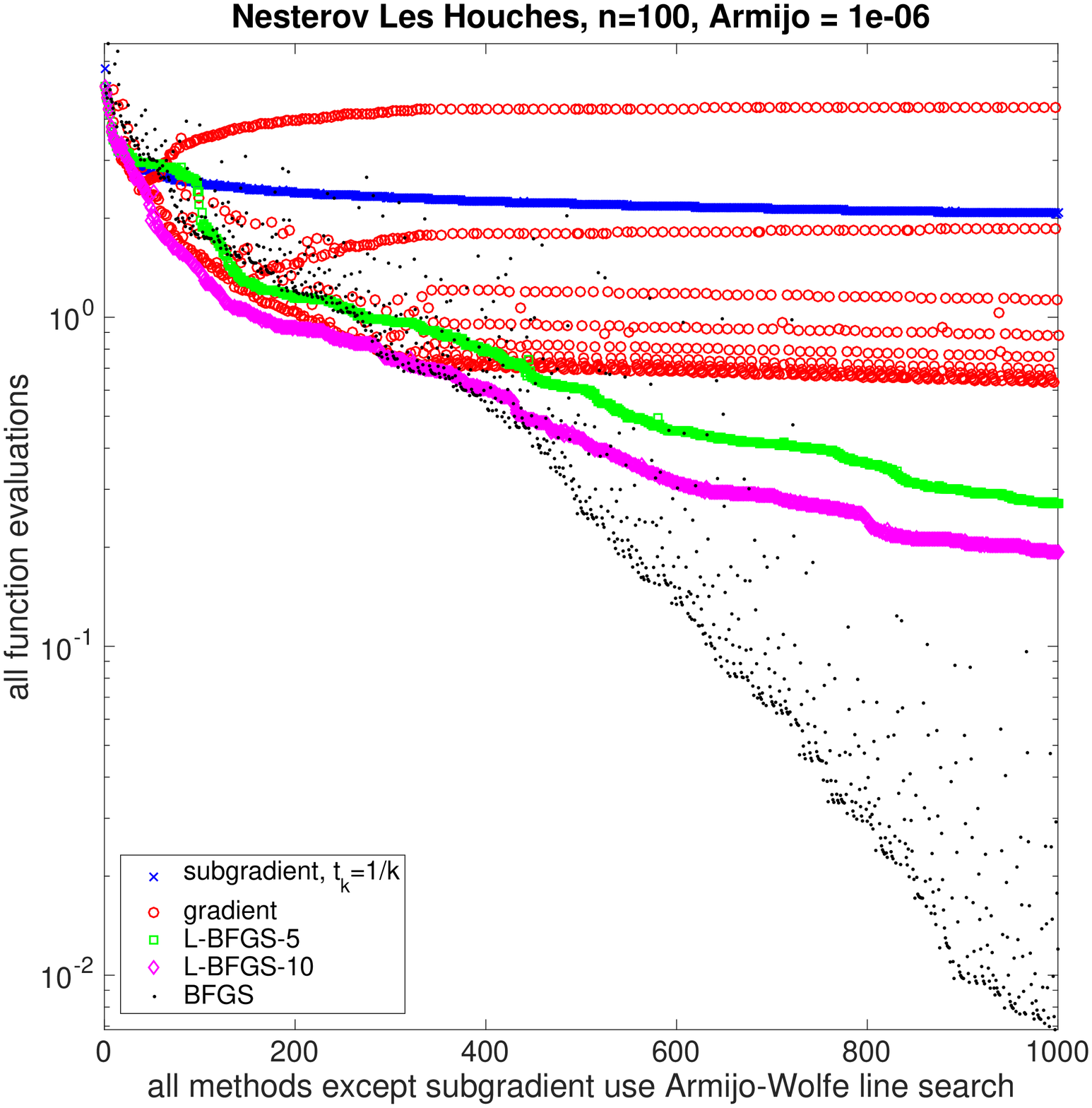}
    \caption{Comparison of five methods for minimizing Nesterov's ill-conditioned convex nonsmooth function 
    $\hat f$. The subgradient method (blue crosses) uses $t_k=1/k$. The gradient, limited-memory BFGS (with 5 and 10 updates
    respectively) and full BFGS methods (red circles, green squares, magenta diamonds and black dots)
    all use the Armijo-Wolfe bracketing line search. All function evaluations are shown.
    Top: Armijo parameter $c_1=0.1.$ Bottom:   Armijo parameter $c_1=10^{-6}$.            }
    \label{fig:yuri_les_houches}
\end{figure}

\bigskip
\bigskip
For this particular example, we see that, in terms of reduction of the function value within a given number of evaluations,
 the gradient method with the Armijo-Wolfe line search when the
Armijo parameter is set to $10^{-6}$ performs better than using the subgradient method's predetermined sequence $t_k=1/k$,
but that this is not the case when the Armijo parameter is set to $0.1$. The smaller value allows the gradient method to take
steps with $t_k=1$ early in the iteration, leading to rapid progress, while the larger value forces shorter steps, quickly leading
to stagnation. Eventually, even the small Armijo parameter requires many steps in the line search --- one can see that on the right
side of the lower figure, at least 8 function values per iteration are required.  
One should not read too much into the results for one example,
but the most obvious observation from Figure \ref{fig:yuri_les_houches} is that the full BFGS and
limited memory BFGS methods are much more effective than the gradient or subgradient methods. 
This distinction becomes far more dramatic if we run the methods
for more iterations: BFGS is typically able to reduce $\hat f$ to about $10^{-12}$ in about 5000 function evaluations, 
while the gradient and subgradient methods fail to reduce $\hat f$ below $10^{-1}$ in the same number of function evaluations.
The limited memory BFGS methods consistently perform better than the gradient/subgradient methods but worse than full BFGS.
The value of the Armijo parameter $c_1$ has little effect on the BFGS variants. 

These results are consistent with substantial prior experience with applying the full
BFGS method to nonsmooth problems, both convex and nonconvex
\cite{LO13,CMO17,GLO17,GL18}. 
However, although the BFGS method requires far fewer operations per iteration than bundle methods or
gradient sampling, it is still not practical when $n$
is large. Hence, the attraction of limited-memory BFGS which, like the gradient and subgradient methods,
requires only $O(n)$ operations per iteration. In a subsequent
paper, we will investigate under what conditions the limited-memory BFGS method applied to
the function $f$ studied in this paper might generate iterates that converge to a non-optimal point,
and, more generally, how reliable a choice it is for nonsmooth optimization.

\appendix
\bibliographystyle{alpha}
\bibliography{refs}

\end{document}